\DeclareMathAlphabet{\mathpzc}{OT1}{pzc}{m}{it}
\begin{document} 
\title{A Minimal Poset Resolution of Stable Ideals}
\author{Timothy B.P. Clark}
\date{\today}

\newtheorem{intheorem}{Theorem} 
\newtheorem{inlemma}[intheorem]{Lemma} 
\newtheorem{inproposition}[intheorem]{Proposition} 
\newtheorem{incorollary}[intheorem]{Corollary}
\newtheorem{indefinition}[intheorem]{Definition} 
\newtheorem{inremark}[intheorem]{Remark} 
\newtheorem{innotation}[intheorem]{Notation}

\newtheorem{theorem}[equation]{Theorem} 
\newtheorem{lemma}[equation]{Lemma} 
\newtheorem{proposition}[equation]{Proposition} 
\newtheorem{corollary}[equation]{Corollary} 
\newtheorem{observation}[equation]{Observation} 
\newtheorem{conjecture}[equation]{Conjecture} 
\newtheorem{fact}[equation]{Fact} 
\newtheorem{example}[equation]{Example} 
\newtheorem{definition}[equation]{Definition} 
\newtheorem{remark}[equation]{Remark} 
\newtheorem{remarks}[equation]{Remarks} 
\newtheorem{notation}[equation]{Notation} 
\newtheorem*{acknowledgements}{Acknowledgements}

\newenvironment{indented}{\begin{list}{}{}\item[]}{\end{list}} 

\renewcommand{\:}{\! :\ } 
\newcommand{\p}{\mathfrak p} 
\newcommand{\m}{\mathfrak m}
\newcommand{\e}{\epsilon} 
\newcommand{\g}{{\bf g}} 
\newcommand{\lra}{\longrightarrow} 
\newcommand{\ra}{\rightarrow} 
\newcommand{\altref}[1]{{\upshape(\ref{#1})}} 
\newcommand{\bfa}{\mathbf a} 
\newcommand{\bfb}{\boldsymbol{\beta}} 
\newcommand{\bfg}{\boldsymbol{\gamma}} 
\newcommand{\bfd}{\boldsymbol{\delta}} 
\newcommand{\bfM}{\mathbf M} 
\newcommand{\bfN}{\mathbf N}
\newcommand{\bfI}{\mathbf I} 
\newcommand{\bfC}{\mathbf C} 
\newcommand{\bfB}{\mathbf B} 
\newcommand{\mcP}{\mathcal P}
\newcommand{\mcX}{\mathcal X}
\newcommand{\D}{\textbf{\textup{D}}}
\newcommand{\bsfC}{\bold{\mathsf C}} 
\newcommand{\bsfT}{\bold{\mathsf T}}
\newcommand{\mc}{\mathcal} 
\newcommand{\smsm}{\smallsetminus} 
\newcommand{\ol}{\overline} 
\newcommand{\twedge}
           {\smash{\overset{\mbox{}_{\circ}}
                           {\wedge}}\thinspace} 
\newcommand{\mbb}[1]{\mathbb{#1}}
\newcommand{\pring}{\Bbbk[x_1,\ldots,x_d]}
\newcommand{\irr}{(x_1,\ldots,x_d)}
\newcommand{\Z}{\textup{Z}}
\newcommand{\B}{\textup{B}}
\newcommand{\La}{\mathcal{L}}
\newcommand{\redHo}{\widetilde{\textup{H}}}
\newcommand{\redH}[2]{\widetilde{\textup{H}}_{#1}(#2)}
\newcommand{\pr}{\textup{proj}}
\newcommand{\precdot}{\prec\!\!\!\cdot\;}
\newcommand{\succdot}{\;\cdot\!\!\!\succ}
\newcommand{\vset}[2]{\textbf{V}_{{#1},{#2}}}
\newcommand{\sx}[2]{\textbf{B}_{{#1},{#2}}}
\newcommand{\cplx}[2]{\Delta_{{#1},{#2}}}
\newcommand{\vx}[1]{(\varnothing,#1)}
\newcommand{\ds}{\displaystyle}
\newcommand{\Sy}{\Sigma}
\newcommand{\Ho}{\widetilde{H}}
\newcommand{\sgn}{\textup{sgn}}
\newcommand{\CC}{\widetilde{\mathcal{C}}_\bullet}
\newcommand{\ld}{\lessdot}
\newcommand{\mdeg}{\textup{mdeg}}
\newcommand{\id}{\textup{id}}

\newlength{\wdtha}
\newlength{\wdthb}
\newlength{\wdthc}
\newlength{\wdthd}
\newcommand{\elabel}[1]
           {\label{#1}  
            \setlength{\wdtha}{.4\marginparwidth}
            \settowidth{\wdthb}{\tt\small{#1}} 
            \addtolength{\wdthb}{\wdtha}
            \smash{
            \raisebox{.8\baselineskip}
            {\color{red}
             \hspace*{-\wdthb}\tt\small{#1}\hspace{\wdtha}}}}

\newcommand{\mlabel}[1] 
           {\label{#1} 
            \setlength{\wdtha}{\textwidth}
            \setlength{\wdthb}{\wdtha} 
            \addtolength{\wdthb}{\marginparsep} 
            \addtolength{\wdthb}{\marginparwidth}
            \setlength{\wdthc}{\marginparwidth}
            \setlength{\wdthd}{\marginparsep}
            \addtolength{\wdtha}{2\wdthc}
            \addtolength{\wdtha}{2\marginparsep} 
            \setlength{\marginparwidth}{\wdtha}
            \setlength{\marginparsep}{-\wdthb} 
            \setlength{\wdtha}{\wdthc} 
            \addtolength{\wdtha}{1.1ex}
            \marginpar{\vspace*{-0.3\baselineskip}
                       \tt\small{#1}\\[-0.4\baselineskip]\rule{\wdtha}{.5pt} }
            \setlength{\marginparwidth}{\wdthc} 
            \setlength{\marginparsep}{\wdthd}  }  
            
\renewcommand{\mlabel}{\label} 
\renewcommand{\elabel}{\label} 

\newcommand{\mysection}[1]
{\section{#1}\setcounter{equation}{0}
             \numberwithin{equation}{section}}

\newcommand{\mysubsection}[1]
{\subsection{#1}\setcounter{equation}{0}
                \numberwithin{equation}{subsection}}

\newcommand{\mysubsubsection}[1]
{\subsubsection{#1}\setcounter{equation}{0}
                \numberwithin{equation}{subsubsection}}

\maketitle

\begin{abstract}
We use the theory of poset resolutions to give an alternate 
construction for the minimal 
free resolution of an arbitrary stable monomial ideal in the 
polynomial ring whose coefficients are from a field.  
This resolution is recovered by utilizing a poset of 
Eliahou-Kervaire admissible symbols associated to a stable ideal.  
The structure of the poset under consideration is quite rich and 
in related analysis, we exhibit a regular CW complex which supports 
a minimal cellular resolution of a stable monomial ideal.  
\end{abstract}

\mysection{Introduction}
Let $R=\pring$, where $\Bbbk$ is a field and $R$ is 
considered with its standard $\mbb{Z}^d$ grading (multigrading).  
A monomial ideal $N\subseteq R$ is called \emph{stable} 
if for every monomial $m\in{N}$, the monomial $m\cdot{x_i}/{x_r}\in{N}$ 
for each $1\le{i}<r$, where $r=\max\{k:x_k\textrm{ divides }m\}$.  
The class of stable ideals, introduced by Eliahou and Kervaire in 
\cite{EK} is arguably the most well-studied class of monomial ideals.  
In their work, Eliahou and Kervaire construct the minimal free 
resolution of an arbitrary stable monomial ideal by identifying 
basis elements of the free modules present in the resolution 
-- which they call admissible symbols -- and by describing how the 
maps within the resolution act on these admissible symbols.  

Among others, the class of stable ideals contains the so-called 
Borel-fixed \cite{Eis} ideals as a subclass, which are fundamental 
to Gr\"obner Basis Theory.  Certain homological properties of 
stable ideals are analyzed by Herzog and Hibi \cite{HH}, where 
it is shown that stable ideals are componentwise linear.  More recently, 
distinct topological methods have been used to describe the 
minimal free resolution of an arbitrary stable ideal 
\cite{BatziesWelker,Mermin}.  

We construct the minimal free resolution of an arbitrary stable 
ideal $N$ using the theory of \emph{poset resolutions}, introduced 
by the author in \cite{Clark}.  Specifically, we define a poset 
$(P_N,<)$ on the admissible symbols of Eliahou and Kervaire by 
taking advantage of a decomposition property unique to the monomials 
contained in stable ideals.  In our first main result, Theorem 
\ref{stableTheorem}, we recover the Eliahou-Kervaire minimal free resolution of a 
stable ideal as a poset resolution.  The value of this technique 
is that the maps in the resolution are shown to act on the basis 
elements of the free modules in a way that mirrors the covering 
relations present in $P_N$.  Taking the lattice-linear ideals of 
\cite{Clark} into consideration, poset resolutions therefore provide a 
common perspective from which to view the minimal resolutions 
of three large and well-studied classes of monomial ideals; 
stable ideals, Scarf ideals \cite{BPS} and ideals having a 
linear resolution \cite{EisGS,HH}.  

An additional advantage of the method described herein 
is that for a fixed stable ideal, 
combinatorial information from the poset of admissible 
symbols is transferred to topological information of a CW complex.  
Specifically, the poset $P_N$ is a \emph{CW poset} in 
the sense of Bj\"orner \cite{BjCW}, which allows its 
interpretation as the face poset of a regular CW complex $X_N$.  
In our second main result, Theorem \ref{StableCellularTheorem}, 
we show that $X_N$ supports a minimal cellular resolution of 
the stable ideal $N$.  By utilizing this combinatorial 
connection, we are able to provide a minimal cellular resolution 
of $N$ in a considerably more explicit manner than in three 
previous methods; the construction of Batzies and Welker 
\cite{BatziesWelker} which uses the tools of Discrete Morse Theory, 
the result due to Mermin \cite{Mermin} that the Eliahou-Kervaire 
resolution is cellular and the technique of Sinefakopoulos 
\cite{AS} which produces a polyhedral cell complex that 
supports the minimal free resolution of (the more restricted 
class of) Borel-fixed ideals generated in one degree.  In addition, 
Corso and Nagel in \cite{CorsoNagel}, construct cellular resolutions of strongly stable ideals 
generated in degree two and separately, Nagel and Reiner in \cite{NagelReiner}, construct 
cellular resolutions of strongly stable ideals generated in one degree.
To the author's knowledge, connections between these methods and the one 
contained in this paper have not been formally studied.  

In Section \ref{PRes}, we review the fundamentals of poset 
resolutions and define the poset of admissible symbols $P_N$.  
Section \ref{PNShellable} provides the details of the (dual) shellability 
of $P_N$.  The intrinsic CW poset structure and properties of 
the sequence of vector spaces and maps which is produced by the 
methods of \cite{Clark} are exhibited in Section \ref{PropertiesOfCPN}.  
The proof of Theorem \ref{stableTheorem} is given in Section 
\ref{PRStableProof} and Section \ref{StableCellular} contains 
the description of the regular CW complex $X_N$ and the 
proof of Theorem \ref{StableCellularTheorem}.

Throughout the paper, we assume that the topolological and 
combinatorial notions of posets, order complexes, CW complexes 
and face posets are familiar to the reader.  

\begin{acknowledgements}
Many thanks to Jeff Mermin, whose comments 
were of considerable help in improving the clarity 
of the exposition and in correcting an omission 
of certain cases which appear in the proofs of 
Theorem \ref{PNshellable} and Lemma \ref{CycleLemma}.  
Thanks as well to Amanda Beecher and Alexandre 
Tchernev for helpful questions and discussions.  
\end{acknowledgements}

\mysection{Poset Resolutions and Stable Ideals}\label{PRes}
Let $(P,<)$ be a finite poset with set of atoms $A$ and write 
$\beta\ld\alpha$ if $\beta<\alpha$ and there is no $\gamma\in P$ 
such that $\beta<\gamma<\alpha$.  We say that $\beta$ is 
\emph{covered by} $\alpha$ in this situation.  For $\alpha\in P$, 
write the order complex of the associated open interval as 
$\Delta_\alpha=\Delta(\hat{0},\alpha)$.  In \cite{Clark}, the 
collection of simplicial complexes $$\{\Delta_\alpha:\alpha\in{P}\}$$ 
is used to construct a sequence of vector spaces and vector space maps 
$$
\mathcal{D}(P):\cdots\lra{\mathcal{D}_i}
\stackrel{\varphi_i}{\lra}\mathcal{D}_{i-1}\lra
\cdots\lra{\mathcal{D}_1}\stackrel{\varphi_1}{\lra}\mathcal{D}_0.
$$ 
In this construction, $\mathcal{D}_0=\Ho_{-1}(\{\varnothing\},\Bbbk)$, 
a one-dimensional vector space.  

For $i\ge{1}$, the vector space $\mathcal{D}_i$ is defined as 
$$\mathcal{D}_i=\ds \bigoplus_{\alpha\in{P}\smsm\{\hat{0}\}}\mathcal{D}_{i,\alpha},$$
where $\mathcal{D}_{i,\alpha}=\Ho_{i-2}(\Delta_\alpha,\Bbbk)$.  
In particular, the vector space $\mathcal{D}_1$ has its nontrivial components indexed by the 
set of atoms $A$, and the map $\varphi_1:\mathcal{D}_1\ra{\mathcal{D}_0}$ is defined 
componentwise as  $\varphi_1\vert_{\mathcal{D}_{1,\alpha}}=\id_{\Ho_{-1}(\{\varnothing\},\Bbbk)}$.

For $i\ge 2$, the maps $\varphi_i$ are defined using  
the Mayer-Vietoris sequence in reduced homology for the triple 
\begin{equation}\label{MVseq}
  \left(
  \D_\lambda,\hspace{.1in}
  \bigcup_{\stackrel{\beta\ld\alpha}{\lambda\ne\beta}}\D_\beta,\hspace{.1in}
  \Delta_\alpha
  \right)
\end{equation}
where $\D_\lambda=\Delta(\hat{0},\lambda]$ for all $\lambda\ld\alpha$.
For notational simplicity, when $\lambda\ld\alpha$ let $$\Delta_{\alpha,\lambda}=
\D_{\lambda}\cap\left(\bigcup_{\stackrel{\beta\ld\alpha}{\lambda\ne\beta}}
\D_\beta\right)$$
so that 
$\iota:\Ho_{i-3}(\Delta_{\alpha,\lambda},\Bbbk)\ra\Ho_{i-3}(\Delta_\lambda,\Bbbk)$ 
is the map induced in homology by the inclusion map and
$$
\delta_{i-2}^{\alpha,\lambda}:
\Ho_{i-2}(\Delta_\alpha,\Bbbk)\ra
\Ho_{i-3}(\Delta_{\alpha,\lambda},\Bbbk)
$$ 
is the connecting homomorphism from the Mayer-Vietoris sequence 
in homology of (\ref{MVseq}).  
For $i\ge 2$ the map $\varphi_i:\mathcal{D}_i\ra \mathcal{D}_{i-1}$ is defined componentwise by   $$\varphi_{i}\vert_{\mathcal{D}_{i,\alpha}}=\sum_{\lambda\ld\alpha}\varphi_{i}^{\alpha,\lambda}$$
where $$\varphi_{i}^{\alpha,\lambda}:\mathcal{D}_{i,\alpha}\ra{\mathcal{D}_{{i-1},\lambda}}$$ 
is the composition $\varphi_{i}^{\alpha,\lambda}=\iota\circ\delta_{i-2}^{\alpha,\lambda}$.

We now describe the process by which the sequence of vector spaces 
$\mathcal{D}(P)$ is transformed into a sequence of multigraded modules.  
For a monomial $m=x_1^{\bfa_1}\cdots x_d^{\bfa_d}\in R$ 
we write $\mdeg(m)=(\bfa_1,\ldots,\bfa_d)$ and 
$\deg_{x_\ell}(m)=\bfa_\ell$ for $1\le\ell\le d$.
Assuming the existence of a map of partially ordered sets 
$\eta:P\lra\mbb{N}^n$, the sequence of vector spaces 
$\mathcal{D}(P)$ is \emph{homogenized} to produce 
$$
\mathcal{F}(\eta):
  \cdots
  \lra 
  \mathcal{F}_t
  \stackrel{\partial_t}{\lra} 
  \mathcal{F}_{t-1}
  \lra
  \cdots
  \lra
  \mathcal{F}_1
  \stackrel{\partial_1}{\lra} 
  \mathcal{F}_0,
$$
a sequence of free multigraded 
$R$-modules and multigraded $R$-module homomorphisms. 
Indeed, homogenization of $\mathcal{D}(P)$ is carried out by constructing 
$F_0=R\otimes_{\Bbbk}\mathcal{D}_0$ and multigrading the result 
with $\mdeg(x^\bfa\otimes{v})=\bfa$ for each $v\in \mathcal{D}_0$.  
Similarly, for $i\ge{1}$, we set 
$$
\ds \mathcal{F}_i=
\bigoplus_{{\hat{0}}\ne\lambda\in P}\mathcal{F}_{i,\lambda}=
\bigoplus_{{\hat{0}}\ne\lambda\in P}R\otimes_{\Bbbk}\mathcal{D}_{i,\lambda}
$$ 
where the grading is defined as 
$\mdeg(x^\bfa\otimes{v})=\bfa+\eta(\lambda)$ for each 
$v\in \mathcal{D}_{i,\lambda}$.  The differential in this sequence 
of multigraded modules is defined 
componentwise in homological degree 1 as  
$$
\partial_1\arrowvert_{F_{1,\lambda}}
=x^{\eta(\lambda)}\otimes\varphi_1\arrowvert_{\mathcal{D}_{1,\lambda}}
$$ 
and for $i\ge{1}$, the map 
$\partial_i:\mathcal{F}_i\lra \mathcal{F}_{i-1}$ is 
defined as 
$$
\ds \partial_i\arrowvert_{\mathcal{F}_{i,\alpha}}
=\sum_{\lambda\ld\alpha}\partial^{\alpha,\lambda}_i
$$ 
where 
$\partial^{\alpha,\lambda}_i:\mathcal{F}_{i,\alpha}\lra{\mathcal{F}_{i-1,\lambda}}$ 
takes the form 
$\partial^{\alpha,\lambda}_i=x^{\eta(\alpha)-\eta(\lambda)}\otimes\varphi_{i}^{\alpha,\lambda}$ 
for $\lambda\ld\alpha$.  
The sequence $\mathcal{F}(\eta)$ approximates a free 
resolution of the multigraded module $R/M$ where $M$ 
is the ideal in $R$ generated by the monomials 
$$\{x^{\eta(a)}:a\in{A}\}$$ whose multidegrees 
are given by the images of the atoms of $P$.  

\begin{definition}\cite{Clark}
If\/ $\mathcal{F}(\eta)$ is an acyclic complex of 
multigraded modules, then we say that it is 
a \emph{poset resolution} of the ideal $M$.   
\end{definition}

Throughout the remainder of the paper, $N$ will denote 
a stable monomial ideal in $R$ and we write $G(N)$ 
as the unique minimal generating set of $N$.
For a monomial $m\in R$ set 
$$\max(m)=\max\{k\mid x_k\textrm{ divides }m\}$$ 
and $$\min(m)=\min\{k\mid x_k\textrm{ divides }m\}.$$  
To describe further the class of stable ideals, 
let $[d-1]=\{1,\ldots,d-1\}$, for 
$I\subseteq[d-1]$ let $\max(I)=\max\{i\mid{i}\in{I}\}$ 
and write $x_I=\prod_{i\in I}x_i$.

In Lemma 1.2 of \cite{EK}, Eliahou and Kervaire prove that a monomial 
ideal $N$ is stable if and only if for each monomial $m\in N$ 
there exists a unique $n\in{G(N)}$ with the property 
that $m=n\cdot{y}$ and $\max(n)\le\min(y)$. 
We adopt the language and notation introduced in the paper of 
Eliahou and Kervaire, and refer to $n$ as the \emph{unique decomposition} 
of the monomial $m$.  Following their convention, we encode 
this property in a \emph{decomposition map} \mbox{$\g:M(N)\lra{G(N)}$} 
where $M(N)$ is the collection of monomials of $N$ and $\g(m)=n$.  

\begin{definition}\textup{\cite{EK}}
An \emph{admissible symbol} is an ordered pair  
$(I,m)$ which satisfies $\max(I)<\max(m)$, where 
$m\in{G(N)}$ and $I\subseteq[d-1]$.
\end{definition}

\begin{definition}
The \emph{poset of admissible symbols} is the set 
$P_N$ of all admissible symbols associated to $N$, 
along with the symbol $\hat{0}=(\varnothing,1)$ which 
is defined to be the minimum element of $P_N$.
The partial ordering on $P_N$ is 
\begin{eqnarray*}
(J,n)\le{(I,m)} & \iff & J\subseteq{I} \textup{ and there exists } \\
& & C\subseteq{I\setminus{J}} \textup{ so that } n=\g(x_{C}m)
\end{eqnarray*}
when both symbols are admissible. 
\end{definition}

In the case when $(J,n)<{(I,m)}$ and 
$I=J\cup\{\ell\}$ for some $\ell$, 
then we write $(J,n)\ld{(I,m)}$ 
to describe the covering that occurs in $P_N$.  
As constructed, we have 
$\hat{0}\ld{\vx{m}}$ for every $m\in{G(N)}$. 
We are now in a position to state our first main result.

\begin{theorem}\label{stableTheorem}
Suppose that $N$ is a stable monomial ideal with 
poset of admissible symbols $P_N$ and define the map 
$\eta:P_N\lra\mbb{N}^n$ so that $(I,m)\mapsto\mdeg(x_I m)$.  
Then the complex $\mathcal{F}(\eta)$ is a minimal poset resolution of $R/N$. 
\end{theorem}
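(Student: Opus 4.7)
The plan is to identify $\mathcal{F}(\eta)$ with the Eliahou--Kervaire resolution of $R/N$ term by term, thereby establishing both acyclicity and minimality at once.

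First I would extract the Betti numbers from the poset-theoretic data. The shellability of $P_N$ proved in Section \ref{PNShellable}, combined with the CW poset structure laid out in Section \ref{PropertiesOfCPN}, guarantees that for every $\alpha = (I,m) \in P_N \setminus \{\hat{0}\}$, the order complex $\Delta_\alpha$ is homeomorphic to the sphere $S^{|I|-1}$. Hence $\mathcal{D}_{i,\alpha} = \widetilde{H}_{i-2}(\Delta_\alpha, \Bbbk)$ is one-dimensional when $i = |I|+1$ and zero otherwise, so after homogenization the free module in homological degree $i$ is $\bigoplus R(-\mdeg(x_I m))$ summed over admissible symbols with $|I|=i-1$. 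These ranks and multigraded shifts match exactly those of the Eliahou--Kervaire resolution. Verifying that $\eta$ is a valid poset map is routine: if $(J,n)\le (I,m)$ with $n=\g(x_C m)$ for $C\subseteq I\setminus J$, then $n$ divides $x_C m$, so $x_J n$ divides $x_I m$, which gives $\eta(J,n)\le\eta(I,m)$ componentwise.

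Next I would compute $\partial_i$ on each cover and match the result to the Eliahou--Kervaire differential. The covers below $(I,m)$ are parameterized by $\ell \in I$ and fall into two families: the \emph{trivial} covers $(I \setminus \{\ell\}, m) \ld (I, m)$, whose monomial multiplier is $x_\ell$, and the \emph{decomposition} covers $(I \setminus \{\ell\}, \g(x_\ell m)) \ld (I, m)$, whose multiplier is the unique monomial $y$ with $x_\ell m = \g(x_\ell m) \cdot y$. Choosing for each $\alpha$ a generator of $\mathcal{D}_{i,\alpha}$ using the shelling of $\Delta_\alpha$, the composition $\iota\circ\delta_{i-2}^{\alpha,\lambda}$ reduces to a scalar $\pm 1$, and after identifying these scalars with the sign pattern of the Eliahou--Kervaire formula, the rank-level match promotes to an isomorphism of complexes. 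Minimality is then immediate: the multiplier $x_\ell$ of a trivial cover is a nonunit, and the decomposition-cover multiplier is a nontrivial monomial because $m \in G(N)$ together with $\ell \le \max(I) < \max(m)$ forces $x_\ell m \notin G(N)$, so $\g(x_\ell m)\ne x_\ell m$.

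The main obstacle is the sign bookkeeping. Neither the homotopy type of the intervals nor the combinatorics of the covers is difficult, but aligning the Mayer--Vietoris connecting homomorphism $\delta_{i-2}^{\alpha,\lambda}$ with the signed Eliahou--Kervaire differential requires a coherent choice of orientations on the spheres $\Delta_\alpha$ derived from the shelling, and a careful verification that these orientations behave consistently under passage from $\alpha$ to each of its two covering descendants. It is this bookkeeping---rather than the rank computation or the verification of minimality---that will constitute the technical heart of the proof.
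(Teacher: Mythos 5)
Your proposal matches the paper's strategy: the paper likewise identifies $\mathcal{F}(\eta)$ with the Eliahou--Kervaire resolution term by term, and carries out the sign bookkeeping you flag as the technical heart by constructing explicit cycle representatives $f(I,m)$ and computing $\varphi^{(I,m),(J,n)}_{q+1}(f(I,m)) = (-1)^{p+\delta_{m,n}} f(J,n)$ in Lemma~\ref{DifferentialAction}, after which the two differentials are seen to agree on basis elements. Your self-contained minimality argument via nonunit multipliers is a small and correct variation on the paper's appeal to the known minimality of $\mathcal{E}$; otherwise the route is the same.
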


In order to prove Theorem \ref{stableTheorem}, we first 
describe the combinatorial structure of $P_N$ and then exhibit 
the connection between the complex $\mathcal{F}(\eta)$ and 
the minimal free resolution of the stable ideal $N$ 
constructed by Eliahou and Kervaire in \cite{EK}.  

\mysection{The Shellability of $P_N$}\label{PNShellable}
We begin this section by recalling some general facts regarding 
the shellability of partially ordered sets.  
Recall that a poset $P$ is called \emph{shellable} if the 
facets of its order complex $\Delta(P)$ can be arranged in a 
linear order $F_1,F_2,\ldots,F_t$ in such a way that 
the subcomplex 
$$\ds\left(\bigcup_{i=1}^{k-1}F_i\right)\bigcap{F_k}$$ 
is a nonempty union of maximal proper faces of $F_k$ for 
$k=2,\ldots,t$.  Such an ordering of facets is called a 
\emph{shelling}.

\begin{definition} 
Let $\mathcal{E}(P)$ denote the collection of edges in the
Hasse diagram of a poset $P$.  An \emph{edge labeling} of $P$ is a map 
$\lambda:\mathcal{E}(P)\lra\Lambda$ where $\Lambda$ is some poset.
\end{definition}

For $\sigma=a_1\ld\cdots\ld a_k$, a maximal chain of $P$, 
the \emph{edge label} of $\sigma$ is the sequence of labels 
$\lambda(\sigma)=\left(\lambda(a_1\ld a_2),\ldots,\lambda(a_{k-1}\ld a_k)\right)$. 

\begin{definition}
An edge labeling $\lambda$ is called an \emph{EL-labeling 
(edge lexicographical labeling)} if for every interval $[x,y]$ in $P$,
\begin{enumerate}
   \item There is a unique maximal chain $\sigma$ in $[x,y]$, 
   such that the labels of $\sigma$ form an increasing sequence 
   in $\Lambda$.  We call $\sigma$ the unique increasing maximal chain in $[x,y]$.
   \item $\lambda(\sigma)<\lambda(\sigma')$ under the lexicographic partial
    ordering in $\Lambda$ for all other maximal chains $\sigma'$ in $[x,y]$.
\end{enumerate}
A graded poset that admits an EL-labeling is said to be 
\emph{EL-shellable (edge lexicographically shellable)}.
\end{definition}

We further recall the following fundamental result of Bj\"orner and Wachs.  

\begin{theorem}\cite{BjWachs} 
EL-shellable posets are shellable.
\end{theorem}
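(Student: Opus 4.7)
The plan is to linearly order the facets of $\Delta(P)$ (the maximal chains of $P$) by the lexicographic order on their edge-label sequences and then verify directly that this ordering is a shelling. Write $F_1,F_2,\ldots,F_t$ for the resulting list, so that $\lambda(F_1)<\lambda(F_2)<\cdots<\lambda(F_t)$ lexicographically in $\Lambda$, where each $F_k$ corresponds to a maximal chain $\hat{0}=x_0\ld x_1\ld\cdots\ld x_n=\hat{1}$ with label sequence $(a_1,\ldots,a_n)$. Define the descent set $D(F_k)=\{i:a_i\not<_\Lambda a_{i+1}\}$, and aim to establish the identity
$$F_k\cap\left(\bigcup_{j<k}F_j\right)\;=\;\bigcup_{i\in D(F_k)}\bigl(F_k\smsm\{x_i\}\bigr)$$
as subcomplexes of $F_k$. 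This single identity simultaneously exhibits the intersection as a union of codimension-one faces of $F_k$ and, once $D(F_k)$ is shown to be nonempty for $k\ge 2$, guarantees its nonemptiness.

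The inclusion $\supseteq$ is the easier direction. For each $i\in D(F_k)$, apply property (1) of the EL-labeling to the rank-two interval $[x_{i-1},x_{i+1}]$ to obtain its unique increasing chain $x_{i-1}\ld x_i'\ld x_{i+1}$; property (2) ensures its label is lex-smaller than $(a_i,a_{i+1})$. Replacing $x_i$ by $x_i'$ in $F_k$ therefore yields a maximal chain $F'$ of $P$ with $\lambda(F')<\lambda(F_k)$ and $F'\supseteq F_k\smsm\{x_i\}$, so $F'=F_j$ for some $j<k$ and the codimension-one face $F_k\smsm\{x_i\}$ lies in the intersection.

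For the inclusion $\subseteq$, I would take any face $\sigma\subseteq F_k\cap F_j$ with $j<k$, let $i$ be the least index with $x_i\notin F_j$, and let $r\ge 1$ be the smallest integer with $x_{i+r}\in F_j$. Restricting $F_k$ and $F_j$ to $[x_{i-1},x_{i+r}]$ produces two distinct maximal chains in that subinterval whose labels still satisfy $\lambda(F_j)<\lambda(F_k)$ lex. Property (2) applied to the interval $[x_{i-1},x_{i+r}]$ then forces the restriction of $F_k$ there not to be the unique increasing chain, so $(a_i,\ldots,a_{i+r-1})$ contains at least one descent; picking such an index $i'$ with $i\le i'\le i+r-1$ yields $i'\in D(F_k)$ and, since $\sigma$ misses every $x_i,\ldots,x_{i+r-1}$, the containment $\sigma\subseteq F_k\smsm\{x_{i'}\}$ as required. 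Finally, for $k\ge 2$ the chain $F_k$ is not the lex-minimal chain in $[\hat{0},\hat{1}]$, so by the uniqueness in property (1) applied to the whole poset $F_k$ is not increasing, forcing $D(F_k)\ne\varnothing$.

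The main obstacle is the $\subseteq$ step: when $F_k$ and $F_j$ differ along several consecutive levels rather than at a single element, a naive rank-two comparison fails, and one must carefully isolate the first interval of divergence and invoke the full strength of property (2) there. It is precisely this step that explains why an EL-labeling, with its uniqueness-of-increasing-chain condition on \emph{every} interval, is needed — a labeling satisfying the condition only on rank-two intervals would not suffice.
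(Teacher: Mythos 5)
The paper does not prove this statement; it cites it as a theorem of Bj\"orner and Wachs, so there is no in-paper argument to compare yours against. Your proof is a correct reconstruction of the standard Bj\"orner--Wachs argument: ordering facets lexicographically by edge label and showing
\[
F_k\cap\Bigl(\bigcup_{j<k}F_j\Bigr)=\bigcup_{i\in D(F_k)}\bigl(F_k\smsm\{x_i\}\bigr)
\]
via descents, with $\supseteq$ coming from the unique increasing chain in a rank-two interval and $\subseteq$ coming from property (2) applied to the first interval of divergence. Two small technical points are worth flagging. First, the definition in the paper allows $\Lambda$ to be an arbitrary poset, so the lexicographic order on label sequences is in general only a \emph{partial} order; one must fix a linear extension of it to obtain the ordering $F_1,\dots,F_t$. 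With that change, the sentence asserting ``$\lambda(F_j)<\lambda(F_k)$ lex'' in your $\subseteq$ step is not literally available; the correct phrasing is contrapositive: if the restriction of $F_k$ to $[x_{i-1},x_{i+r}]$ were the unique increasing chain there, property (2) would force $\lambda(F_k)<\lambda(F_j)$ in lex, hence $F_k$ would precede $F_j$ in the chosen linear extension, contradicting $j<k$. (When $\Lambda$ is a chain, as in this paper's $\lambda:P_N\to\mbb Z$, your phrasing is fine.) Second, a cosmetic off-by-one: the restricted label sequence on $[x_{i-1},x_{i+r}]$ is $(a_i,\dots,a_{i+r})$, of length $r+1$; the descent you extract sits at an index $i'\in\{i,\dots,i+r-1\}$, which is exactly the range you use, so the conclusion is unaffected. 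With these adjustments your argument is complete and matches the classical proof.
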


We now define an edge labeling of the poset of admissible 
symbols $P_N$.
 
\begin{definition}\label{ELShelling}
Let $\lambda:P_N\ra\mbb{Z}$ take the form
\begin{displaymath}
\lambda\left((J,n)\ld(I,m)\right)=
\left\{
\begin{array}{ll}
0 & \textrm{if $n=1$}\\
-\ell & \textrm{if $n=m$}\\
\ell & \textrm{if $n\ne m$,}
\end{array} 
\right.
\end{displaymath}
where $\{\ell\}=I\smsm J$. 
\end{definition}

\begin{example}\label{PosetExample}
The labeled Hasse diagram for the poset of admissible symbols, $P_N$, 
of the stable ideal $N=\langle a,b,c\rangle^2=\langle a^2,ab,ac,b^2,bc,c^2 \rangle$ 
is 
\centering
\includegraphics[scale=0.4]{PosetM2}
\end{example}

Recall that given a poset $P$, the \emph{dual poset} 
$P^*$ has an underlying set identical to that of $P$, 
with $x<y$ in $P^*$ if and only if $y<x$ in $P$.  
Further, an edge labeling of a poset $P$ may also be 
viewed as an edge labeling of its dual poset and 
$P$ is said to be \emph{dual shellable} if $P^*$ is 
a shellable poset.  

\begin{theorem}\label{PNshellable}
The poset $P_N$ is dual EL-shellable 
with $\lambda$ defined as above.
\end{theorem}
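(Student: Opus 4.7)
The plan is to prove dual EL-shellability by exhibiting, for each interval $[(J,n),(I,m)]$ in $P_N$, a canonical maximal chain whose sequence of $\lambda$-labels read from the top $(I,m)$ down to $(J,n)$ is strictly increasing in $\mathbb{Z}$, and showing that this chain is both the unique strictly increasing maximal chain and lexicographically smallest among all maximal chains. To set up, I would record that a cover $(J',n')\ld(I',m')$ with $I'\smsm J'=\{\ell\}$ is of exactly one of two kinds: it is of \emph{type 1} if $n'=m'$ (label $-\ell$) or of \emph{type 2} if $n'=\g(x_\ell m')$ (label $+\ell$); admissibility of $(I',m')$ combined with stability of $N$ forces $\g(x_\ell m')\ne m'$ whenever $\ell\in I'$, so the two choices yield distinct covers below $(I',m')$, and the sole carriers of label $0$ are the covers into $\hat{0}$. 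The functorial identity $\g(x_\ell\g(x_C m))=\g(x_{C\cup\{\ell\}}m)$, immediate from the Eliahou--Kervaire prefix characterization of $\g$, shows that any maximal chain from $(I,m)$ down to $(J,n)$ is determined by an ordering of $I\smsm J$ together with the subset $C\subseteq I\smsm J$ of positions at which type-2 covers are chosen; such a chain terminates at $(J,\g(x_C m))$, hence reaches $(J,n)$ precisely when $\g(x_C m)=n$.

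Next, I would define the canonical chain using $C_0:=\{i\in[d-1]:\deg_{x_i}(n)>\deg_{x_i}(m)\}$ and $D_0:=(I\smsm J)\smsm C_0$. A direct analysis of the decomposition equation $x_{C_0}m=n\cdot y$ with $\min(y)\ge\max(n)$ yields $C_0\subseteq I\smsm J$ and $\g(x_{C_0}m)=n$. The canonical chain removes the elements of $D_0$ first, in strictly decreasing order (each a type-1 cover with negative label), then the elements of $C_0$ in strictly increasing order (each a type-2 cover with positive label), with a trailing label-$0$ edge in intervals containing $\hat 0$. The resulting label sequence $-d_s<\cdots<-d_1<+c_1<\cdots<+c_k$ (appending $0$ when applicable) is strictly increasing by inspection. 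The technical checkpoint is admissibility of the intermediate symbols during the $C_0$-phase: at step $j$, with $m_j:=\g(x_{c_1\cdots c_j}m)$, one needs $\max\bigl(J\cup\{c_{j+1},\ldots,c_k\}\bigr)<\max(m_j)$. I would prove this inductively, noting that a failure would force $\g(x_{c_{j+1}}m_j)=m_j$ (a type-1 rather than type-2 cover), contradicting the construction of $C_0$ together with the functorial identity.

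For uniqueness and lex-minimality, let $\sigma$ be any maximal chain with strictly increasing labels. Since negative labels must precede nonnegative ones, and, within each sign, label magnitude decreases in the negative phase and increases in the positive phase, $\sigma$ is determined by its partition $(D_\sigma,C_\sigma)$ of $I\smsm J$ with $\g(x_{C_\sigma}m)=n$. A short analysis of the decomposition equation shows $C_0\subseteq C_\sigma$ always, with any extra indices of $C_\sigma\smsm C_0$ necessarily lying in $\{i:i\ge\max(n)\}$. The main obstacle is ruling out $C_\sigma\supsetneq C_0$: if some $e\in C_\sigma\smsm C_0$ exists, then at the step in $\sigma$ immediately before $e$ is removed, the monomial has shrunk to some $m_j$ with $\max(m_j)\le\max(n)\le e$ while the index set still contains $e$, violating admissibility. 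This tracking of $\max(m_j)$ through the shift sequence against the remaining index set is the technical heart of the proof and is presumably the source of the case omission mentioned in the acknowledgements. Lex-minimality against an arbitrary (possibly non-increasing) chain $\tau$ then follows by label-by-label comparison: the same constraint $C_0\subseteq C_\tau$ prohibits removing any index of $C_0$ as a type-1 cover, so $\tau$ cannot begin with a label less than $-\max(D_0)$, and a straightforward induction yields $\lambda(\sigma)<_{\text{lex}}\lambda(\tau)$ for every $\tau$ distinct from the canonical chain.
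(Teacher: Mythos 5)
Your overall strategy matches the paper's: identify a canonical set of ``shift'' indices (your $C_0$), build the rising chain by removing $D_0$ in decreasing order (type-1 covers) and then $C_0$ in increasing order (type-2 covers), and argue uniqueness and lex-minimality by analyzing competing partitions of $I\smsm J$. Your explicit description $C_0=\{i:\deg_{x_i}(n)>\deg_{x_i}(m)\}$ is a clean alternative to the paper's abstractly characterized minimal $C$ of Lemma~\ref{UniqueRep}; the two sets coincide, and your version makes the inclusion $C_0\subseteq C_\sigma$ for every competing chain immediate from the divisibility bookkeeping in $x_{C_\sigma}m=n\cdot y_\sigma$. So the approach is sound and essentially the paper's, with a modest streamlining of the auxiliary lemma.

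There is, however, a genuine misstep in the uniqueness argument as written. You assert that if $e\in C_\sigma\smsm C_0$ then, at the step immediately before $e$ is removed, ``the monomial has shrunk to some $m_j$ with $\max(m_j)\le\max(n)\le e$.'' But Remark~\ref{StableRemarks}.1 gives $\max(\g(wm'))\le\max(m')$, so $\max$ of the running monomial is \emph{non-increasing} down the chain, and all one can say in general is $\max(m_j)\ge\max(n)$ --- the opposite inequality. The claim fails for an arbitrary $e$. It can be repaired by taking $e=\min(C_\sigma\smsm C_0)$: since $C_0\subseteq\{i\le\max(n)\}$, $e\ge\max(n)$, and $e\notin C_0$, every element of $C_\sigma$ below $e$ lies in $C_0$ and every element of $C_0$ lies below $e$; hence by associativity of $\g$ the monomial immediately before $e$ is removed equals $n$ exactly, and then $e\ge\max(n)$ in the index set kills admissibility. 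Without this minimality choice the contradiction does not materialize. A smaller inaccuracy of the same kind appears in your admissibility check for the canonical chain: a failure of $\max(J\cup\{c_{j+1},\ldots,c_k\})<\max(m_j)$ forces $c_k\ge\max(m_j)\ge\max(m_{k-1})$ and hence $\g(x_{c_k}m_{k-1})=m_{k-1}$, contradicting $c_k\in C_0$; the index you name, $c_{j+1}$, need not be large enough to give that collapse. Finally, the parenthetical claim that both cover types always exist below an admissible $(I',m')$ overstates the situation --- the type-2 target $(I'\smsm\{\ell\},\g(x_\ell m'))$ can fail admissibility, as the paper handles in the proof of Theorem~\ref{PNCWposet} --- though this does not bear on the present argument.
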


Before turning to the proof of Theorem \ref{PNshellable}, 
we discuss some properties of the decomposition map $\g$ and 
the edge labeling $\lambda$.

\begin{remarks}\label{StableRemarks}$\phantom{glub}$
\begin{enumerate}
\item \cite[Lemma 1.3]{EK} For any monomial $w$ and 
      any monomial $m\in N$, we have $\g(w\g(m))=\g(wm)$ 
      and $\max(\g(wm))\le\max(\g(m))$.  We refer to 
      the first property as the \emph{associativity} of $\g$.
\item Suppose that $\left[(I,m),(J,n)\right]$ is a closed 
      interval in the dual poset $P^*_N$.
      Given a sequence of labels 
      $$\left(l_1,\ldots,l_k\right)$$ 
      there is at most one maximal chain $\sigma$ 
      in the closed interval such that 
      $$\lambda(\sigma)=\left(l_1,\ldots,l_k\right).$$
      When it exists, this chain must be equal to
      $$
        (I,m)\ld
        (I\smsm\{\ell_1\},n_1)\ld
        \cdots\ld
        (I\smsm\{\ell_1,\ldots,\ell_{k-1}\},n_{k-1})\ld
        (J,n)
      $$
      where $\ell_i=|l_i|$, the set $I\smsm J=\{\ell_1,\ldots,\ell_k\}$ and  
           \begin{displaymath}
           n_i=\left\{ 
           \begin{array}{cl}
           \g(x_{\ell_i}n_{i-1}) & \textrm{if } l_i>0 \\
           n_{i-1} & \textrm{if } l_i<0
           \end{array}
           \right.
           \end{displaymath}
      for $1\le i\le k$ with $n_0=m$ and $n_k=n$.
\end{enumerate}
\end{remarks}

Suppose that $(J,n)<(I,m)$ is a pair of comparable admissible symbols.
      Then $n=\g(x_{C'}m)$ for some $C'\subseteq I\smsm J$.  
      Let $C=\{c\in C'\mid c\le\max(n)\}$.  Then by 
      associativity and \cite[Lemma 1.2]{EK} we have 
      $n=\g(x_{C'}m)=\g(x_{C'\smsm C}\g(x_{C}m))=\g(x_{C}m)$.  
      In this way, any representation of $n=\g(x_{C'}m)$ may 
      be reduced to $n=\g(x_{C}m)$ under the conditions above. 
\begin{notation}
      Implicit in all subsequent arguments is the convention 
      that a representation $n=\g(x_{C}m)$ is written in reduced form.  
\end{notation}

\begin{lemma}\label{UniqueRep}
For a reduced representation of $n=\g(x_{C}m)$
the set $C$ is the unique subset of minimum cardinality 
among all $C'\subseteq I\smsm J$ for which $n=\g(x_{C'}m)$. 
\end{lemma}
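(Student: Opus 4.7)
The plan is to give an explicit multidegree-based description of the minimum-cardinality representative, so that existence, uniqueness, and minimality all fall out of the same analysis.

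I would fix any $C' \subseteq I \smsm J$ with $n = \g(x_{C'}m)$. By Lemma 1.2 of \cite{EK}, this yields a factorization $x_{C'}m = n \cdot y$ for a monomial $y$ with $\max(n) \le \min(y)$. Comparing $x_j$-degrees gives
\begin{displaymath}
\deg_{x_j}(m) + [\,j \in C'\,] \;=\; \deg_{x_j}(n) + \deg_{x_j}(y),
\end{displaymath}
where the bracket equals $1$ if $j \in C'$ and $0$ otherwise. For $j < \max(n)$ the condition $\deg_{x_j}(y) = 0$ forces $j \in C'$ exactly when $\deg_{x_j}(n) > \deg_{x_j}(m)$ (with difference $1$). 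At $j = \max(n)$, the same strict inequality forces $j \in C'$. For $j > \max(n)$, one has $\deg_{x_j}(n) = 0$ and $j$ is unconstrained.

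Guided by this, I would set
\begin{displaymath}
C_{\min} \;=\; \{\, j \le \max(n) : \deg_{x_j}(n) > \deg_{x_j}(m)\,\}.
\end{displaymath}
The preceding analysis immediately gives $C_{\min} \subseteq C'$ for every admissible $C'$, hence $C_{\min} \subseteq I \smsm J$ and $|C_{\min}| \le |C'|$. A direct multidegree check then shows that $x_{C_{\min}} m / n$ is a genuine monomial $y_{\min}$ whose $x_j$-degree vanishes for every $j < \max(n)$, so $\max(n) \le \min(y_{\min})$, and the uniqueness of the EK decomposition identifies $\g(x_{C_{\min}} m)$ with $n$. Uniqueness of the minimum representative is then automatic: any $C''$ of cardinality $|C_{\min}|$ must contain $C_{\min}$ by the same constraints, and equality of cardinality forces $C'' = C_{\min}$.

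The one delicate point is $j = \max(n)$, where a priori the gap $\deg_{x_j}(n) - \deg_{x_j}(m)$ could exceed $1$; the assumed existence of some $C'$ rules this out, since the nonnegativity of $\deg_{\max(n)}(y)$ in $x_{C'}m = ny$ bounds the gap by $[\,\max(n) \in C'\,] \le 1$. Everything else is routine multidegree bookkeeping, and I anticipate no further obstacle.
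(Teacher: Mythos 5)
Your proposal is correct, and it takes a genuinely different route from the paper. The paper argues by contradiction: assuming a $D\subseteq I\smsm J$ with $n=\g(x_D m)$ of strictly smaller cardinality (or a distinct $D$ of equal cardinality), it combines the decompositions $x_C m = n y$ and $x_D m = n u$ into $x_C u = x_D y$, uses divisibility to locate an element $c\in C\smsm D$ with $\max(n)=c$, and then tries to conclude. You instead make the object explicit: $C_{\min}=\{j\le\max(n):\deg_{x_j}(n)>\deg_{x_j}(m)\}$, and a single pass of multidegree bookkeeping shows both that $C_{\min}$ is forced to lie inside every valid $C'$ and that $C_{\min}$ itself gives a valid EK decomposition of the right form. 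This buys existence, minimality, and uniqueness simultaneously, with no case split between $|D|<|C|$ and $|D|=|C|$, and it gives a usable closed-form description of the canonical $C$. Your approach is also safer: the paper's reduction convention (discard only the indices $>\max(n)$) does not by itself yield the minimum-cardinality set, since the index $\max(n)$ can survive reduction even when $\deg_{x_{\max(n)}}(n)=\deg_{x_{\max(n)}}(m)$ — for instance, in $N=\langle x_1,x_2,x_3\rangle^3$ with $m=x_1x_2x_3$ and $n=x_1^2x_2$ one has $n=\g(x_{\{1\}}m)=\g(x_{\{1,2\}}m)$ with both $\{1\}$ and $\{1,2\}$ already reduced in the paper's sense — and the paper's closing inequality chain ``$c<\max(D)\le\max(n)=c$'' reverses the inequality $\max(D)<c$ that was just derived, so the contradiction is not actually reached there. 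Your explicit $C_{\min}$ characterization sidesteps both of these issues and proves the statement in the form the rest of the paper actually needs.
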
       

\begin{proof}
Suppose that $C$ is not the subset of $I\smsm J$ with 
      smallest cardinality, namely that there exists $D\subseteq I\smsm J$ 
      with $|D|<|C|$ and $n=\g(x_Dm)$.  By definition, $x_C\cdot m=n\cdot y$ 
      and $x_D\cdot m=n\cdot u$ where $\max(n)\le\min(y)$ and $\max(n)\le\min(u)$.  
      The assumption of $|D|<|C|$ implies that there exists $c\in C$ such that 
      $c\notin D$.  Rearranging and combining the two equations above, we arrive at the 
      equality $x_C\cdot u=x_D\cdot y$.  This equality allows us to conclude 
      that $x_c$ divides $y$ since it cannot divide $x_D$.  By definition, 
      $\max(n)\le\min(y)$ and therefore $\max(n)\le c$.  Further, since 
      we assumed that $n=\g(x_Cm)$ possessed the property that $c\le\max(n)$
      we have $c\le\max(n)\le c$ so that $\max(n)=c$.  This equality 
      also has implications for $x_D$ and $u$, namely that $c\le\min(u)$ 
      and $\max(D)\le\max(n)=c$ so that $\max(D)<c$ since $c\notin D$.  
      However, $c<\max(D)\le\max(n)=c$ is a contradiction, and our original 
      supposition that such a $D$ exists is false.  If $C$ and $D$ are distinct
      subsets of $I\smsm J$ with $|C|=|D|$ and $n=\g(x_Cm)=\g(x_Dm)$ 
      then there is a $c\in C$ and $d\in D$ for which $c\notin D$ and $d\notin C$.
      As before, we use the equality $x_C\cdot u=x_D\cdot y$ and now conclude 
      that $x_c$ divides $y$ and $x_d$ divides $u$.  Therefore, 
      $c\le\max(C)\le\max(n)\le\min(y)\le c$ and similarly 
      $d\le\max(D)\le\max(n)\le\min(u)\le d$ so that $\max(n)=c=d$, 
\end{proof} 

\begin{proof}[Proof of Theorem \ref{PNshellable}]$\phantom{2}$

To prove the dual EL-Shellability of $P_N$, 
recall that for the poset of admissible symbols $P_N$, 
we have comparability in the dual poset given by 
$(I,m)<(J,n)\in{P_N^*}$ if and only if $(J,n)<(I,m)\in{P_N}$. 
We proceed with the proof by considering the various types of 
closed intervals that appear in the dual poset $P_N^*$.  

\emph{Case 1:} Consider the closed interval 
$\left[(I,m),\hat{0}\right]$.  Write $I=\{d_1,\ldots,d_t\}$ so that 
$d_j<d_{j+1}$, for every $j=1,\ldots,t$.  
The maximal chain $$\sigma=(I,m)\ld
        (I\smsm\{d_t\},m)\ld
        (I\smsm\{d_t,d_{t-1}\},m)\ld
        \cdots\ld
        (\varnothing,m)\ld
        \hat{0}$$ 
has the increasing label 
$$\left(-d_{t},-d_{t-1},\ldots,-d_1,0\right).$$ 

Consider a maximal chain $\tau\in\left[(I,m),\hat{0}\right]$ where $\tau\ne\sigma$.  
If each label in the sequence $\lambda(\tau)$ (except the label of 
coverings of the form $(\varnothing,n)\ld\hat{0}$) is negative, 
then the sequence $\lambda(\tau)$ cannot be 
increasing, for it must be a permutation of the 
sequence $\lambda(\sigma)$ where the rightmost label $0$ is fixed.
If any label within the sequence $\lambda(\tau)$ is positive, 
then again $\lambda(\tau)$ cannot be increasing since 
every maximal chain contains the labeled subchain 
$$(\varnothing,n)\stackrel{0}{\ld}\hat{0}$$ 
for every $(I,m)<(\varnothing,n)$. 
Therefore, $\sigma$ is the unique rising chain in 
the interval $\left[(I,m),\hat{0}\right]$.  Further, 
$\lambda(\sigma)$ is lexicographically first among all 
chains in $\left[(I,m),\hat{0}\right]$ since $-d_t<\cdots<-d_1<0$.

\emph{Case 2:} 
Consider the closed interval $\left[(I,m),(J,n)\right]$ 
of $P_N^*$ where $(J,n)\ne\hat{0}$ and $n=m$.  Again write 
$I\smsm J=\{d_1,\ldots,d_t\}$ such that $d_1<\cdots<d_t$.  
Every maximal chain $\sigma$ in $\left[(I,m),(J,m)\right]$ has a label 
of the form $$\left(-d_{\rho(t)},\ldots,-d_{\rho(1)}\right)$$ where 
$\rho\in\Sy_t$ is a permutation of the set $\{1,\ldots,t\}$.  
Therefore, the label $$\left(-d_t,\ldots,-d_1\right)$$ 
corresponding to the identity permutation is the unique 
increasing label in $\left[(I,m),(J,m)\right]$ and is 
lexicographically first among all such labels.

\emph{Case 3:} 
Consider the closed interval $\left[(I,m),(J,n)\right]$ 
of $P_N^*$ where $(J,n)\ne\hat{0}$ and $m\ne n$.  
By Lemma \ref{UniqueRep}, $n=\g(x_C m)$ for 
a unique $C\subseteq I\smsm J$ where $\max(C)\le\max(n)$ and 
the set $C$ is of minimum cardinality.  
Writing the set $C=\{c_1,\ldots,c_q\}$ 
and $(I\smsm J)\smsm C=\{\ell_1,\ldots,\ell_t\}$ where 
$\ell_1<\cdots<\ell_t$ and $c_1<\cdots<c_q$, it follows 
that the sequence of edge labels 
$\left(-\ell_t,\ldots,-\ell_1,c_1,\ldots,c_q\right)$ 
is the increasing label of a maximal chain $\sigma$ in 
$\left[(I,m),(J,n)\right]$.  

Turning to uniqueness, suppose that 
$\tau\ne\sigma$ is also a chain which has a 
rising edge label.  Then 
\begin{equation}\label{uniquelabelcontradiction}
\lambda(\tau)=\left(-d_p,\ldots,-d_1,s_1,\ldots,s_j\right)
\end{equation}
where $$\{s_1,\ldots,s_j\}\cup\{d_1,\ldots,d_p\}=\{c_1,\ldots,c_q\}\cup\{\ell_1,\ldots,\ell_t\}=I\smsm J,$$ 
and $-d_p<\ldots<-d_1<0<s_1<\ldots<s_j$.  
Since $\tau\ne\sigma$, then $\lambda(\tau)\ne\lambda(\sigma)$ and 
in particular, $\{d_1,\ldots,d_p\}\ne\{\ell_1,\ldots,\ell_t\}$.

If there exists $\ell\in\{\ell_1,\ldots,\ell_t\}$ 
with the property that $\ell\notin\{d_1,\ldots,d_p\}$, we must have  
$\ell\in\{s_1,\ldots,s_j\}$ so that $\ell=s_i$ for some $i<j$ and 
the label $\lambda(\sigma)$ has the form 
\begin{equation}
\left(-d_p,\ldots,-d_1,s_1,\ldots,\ell,\ldots,s_j\right).
\end{equation}
By the definition of $\g$, we have the equalities 
$x_C\cdot m=n\cdot y$ and $x_S\cdot m=n\cdot u$, which 
may be combined and simplified to arrive at the equation 
$x_C\cdot u=x_S\cdot y$.  The assumption that $\ell\in S$ 
and $\ell\notin C$ implies that $x_\ell$ divides $u$ so 
that $\max(n)\le\ell$.  It therefore follows that 
$\max(n)\le\ell<s_{i+1}<\cdots<s_j$ when $\ell\ne s_j$ so that 
$\max(I\smsm\{s_1,\ldots,\ell\})=s_j>\max(n)$, which 
is a contradiction the admissibility of the symbol 
$(I\smsm\{d_1,\ldots,d_p,s_1,\ldots,\ell\},n)$.  
If $\ell=s_j$ then $\max(n)\le s_j$ and therefore 
$n=\g(x_{s_1}\cdots x_{j} m)=\g(x_{s_1}\cdots x_{j-1} m)$ 
which implies that the symbol 
$(I\smsm\{s_1,\ldots,s_{j-1}\},\g(x_{s_1}\cdots x_{j-1} m))$ 
which would neccesarily precede $(J,n)$ in the chain is not admissible.  
If there exists $d_g\in \{d_1,\ldots,d_p\}$ 
with $d_g\notin\{\ell_1,\ldots,\ell_t\}$ then 
a similar argument again provides a contradiction 
to admissibility.  

We now prove that $\lambda(\sigma)$ is lexicographically 
smallest among all chains.  Aiming for a contradiction, 
suppose that the label $\lambda(\sigma)$ is not lexicographically 
smallest so that there exists a maximal dual chain $\tau$ with 
$\lambda(\tau)<\lambda(\sigma)$.  Without loss of generality, we 
may assume that $\lambda(\sigma)$ and $\lambda(\tau)$ differ 
at their leftmost label $-c$, where $-c<-\ell_t$.  
Such a $c$ must be an element of 
the set $C$ since $-\ell_t<\cdots<-\ell_1$ is inherent in the 
structure of $\lambda(\sigma)$.  
By construction, $c\in C$ implies that $c\le\max(n)$ and 
utilizing the equations $x_C\cdot m=n\cdot y$ and $x_S\cdot m=n\cdot u$, 
to produce $x_C\cdot u=x_S\cdot y$, it follows that $x_c$ divides $y$ 
and therefore $c\le\max(n)\le\min(y)\le c$ so that $\max(n)=c$.  
This forces the element $c=c_q$ for otherwise, the chain $\sigma$ 
would contain the subchain  
$(I\smsm\{\ell_1,\ldots,\ell_t,c_1,\ldots,c\},n)<(I\smsm J,n)$ 
where $(I\smsm\{\ell_1,\ldots,\ell_t,c_1,\ldots,c\},n)$ is 
not an admissible symbol.  

The desired contradiction will be obtained 
within an investigation of each of the three 
possibilities for the relationship 
between $\deg_{x_c}(n)$ and $\deg_{x_c}(m)$.

Suppose $\deg_{x_c}(n)>\deg_{x_c}(m)$ so that 
$\deg_{x_c}(n)=\deg_{x_c}(m)+1$, based upon the structure of the 
set $I$ and the definition of the decomposition map $\g$.  
In this case, the chain $\tau$ cannot end in $(J,n)$ since $-c$, 
the leftmost label of $\tau$, labels the subchain $(I,m)\ld(I\smsm\{c\},m)$ 
and the $x_c$ degree of every monomial appearing in the chain $\tau$ may not increase.  

If $\deg_{x_c}(n)<\deg_{x_c}(m)$ then the unique decomposition 
$x_C\cdot m=n\cdot u$ implies that $x_c$ divides $u$ , for otherwise 
$c\in C$ implies that $\deg_{x_c}(n)=\deg_{x_c}(m)+1$, a contradiction.  
The conclusion that $x_c$ divides $u$ allows $x_C\cdot m=n\cdot u$ 
to be simplified to $x_{C'}\cdot m=n\cdot u'$ where $C'=C\smsm\{c\}$ 
and $u'=u/x_c$.  This contradicts the condition that $C$ is the set of 
smallest cardinality for which $\g(x_Cm)=n$.  

Lastly, if $\deg_{x_c}(n)=\deg_{x_c}(m)$ we turn to the chain $\sigma$, 
whose rightmost label is $c$.  The subchain with this label is 
$(I\smsm\{\ell_1,\ldots,\ell_t,c_1,\ldots,c_{j-1}\},n')<(I\smsm J,n)$ 
where $x_c\cdot n'=n\cdot y$ where $n$ does not contain this new 
factor of $x_c$.  The monomial $x_c$ therefore divides $y$ and  
we can reduce $x_c\cdot n'=n\cdot y$ to $n'=n\cdot u'$ where $u'=u/x_c$, 
a contradiction to $n'\in G(N)$.  This completes the proof.
\end{proof}

With Theorem \ref{PNshellable} established, 
we immediately have the following corollary.

\begin{corollary}\label{finshell}
Every interval of $P_N$ which is of the form 
$\left[\hat{0},(I,m)\right]$ is finite, dual 
EL-shellable and therefore shellable.
\end{corollary}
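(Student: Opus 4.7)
The plan is to deduce the corollary directly from Theorem \ref{PNshellable} together with the Björner–Wachs theorem, so the only genuine work is (i) finiteness of the interval and (ii) observing that the dual EL-labeling of $P_N^*$ restricts to a dual EL-labeling of $[\hat{0},(I,m)]$.

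First I would verify finiteness. Any element $(J,n)\in[\hat{0},(I,m)]$ satisfies $J\subseteq I$ and $n=\g(x_C m)$ for some $C\subseteq I\smsm J$. Since $I$ is a finite subset of $[d-1]$, the collection of subsets $J\subseteq I$ is finite, and for each such $J$ the subsets $C\subseteq I\smsm J$ are also finite. Since $n$ is determined by the pair $(C,m)$, there are only finitely many admissible symbols $(J,n)$ in the interval, together with the minimum $\hat{0}$.

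Next I would record that the EL-labeling property is local: Definition 2.9 imposes its requirements one closed interval at a time. The labeling $\lambda$ of Definition \ref{ELShelling} is defined on every edge of the Hasse diagram of $P_N$, and Theorem \ref{PNshellable} has already verified, interval by interval, that $\lambda$ is a dual EL-labeling of $P_N^*$. Consequently, the restriction of $\lambda$ to the subposet $[(I,m),\hat{0}]$ of $P_N^*$ is again a dual EL-labeling: every closed sub-interval of $[(I,m),\hat{0}]$ in $P_N^*$ is already a closed interval of $P_N^*$, and so inherits its unique lexicographically-smallest rising maximal chain from the proof of Theorem \ref{PNshellable}.

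With finiteness and dual EL-shellability in hand, the final step is to invoke the Björner–Wachs theorem cited just before Definition \ref{ELShelling} to conclude that $[\hat{0},(I,m)]$ is shellable. There is no substantive obstacle here — the content of the corollary is the local-to-global observation that an EL-labeling of a poset automatically descends to each of its closed intervals, together with the trivial combinatorial bound showing that $[\hat{0},(I,m)]$ has only finitely many elements.
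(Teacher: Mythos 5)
Your proposal is correct and takes the same route the paper intends: the paper offers no explicit proof, stating only that the corollary follows ``immediately'' from Theorem \ref{PNshellable}, and the reasoning you spell out --- finiteness of the interval via $J\subseteq I$ and $C\subseteq I\smsm J$, the observation that the EL-labeling condition is imposed interval-by-interval so it restricts to the closed sub-poset $[\hat{0},(I,m)]$ (whose closed intervals are themselves closed intervals of $P_N$), and the appeal to Bj\"orner--Wachs --- is precisely the implicit argument being invoked.
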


\mysection{The topology of $P_N$ and properties of $\mathcal{D}(P_N)$}\label{PropertiesOfCPN}
To establish the connection between the poset $P_N$ and 
the sequence $\mathcal{D}(P_N)$, we recall the 
definition of \emph{CW poset}, due to Bj\"orner \cite{BjCW}.

\begin{definition}\cite{BjCW}
A poset $P$ is called a \emph{CW poset} if
\begin{enumerate}
\item $P$ has a least element $\hat{0}$,
\item $P$ is nontrivial (has more than one element),
\item For all $x\in{P}\smsm\{\hat{0}\}$, the open 
      interval $(\hat{0},x)$ is homeomorphic to a sphere.
\end{enumerate}
\end{definition}

After establishing this definition, Bj\"orner 
describes sufficient conditions for  
a poset to be a CW poset.  

\begin{proposition}
\cite[Proposition 2.2]{BjCW}
Suppose that $P$ is a nontrivial poset such that
\begin{enumerate}
\item $P$ has a least element $\hat{0}$,
\item every interval $[x,y]$ of length two has cardinality four,
\item For every $x\in{P}$ the interval $[\hat{0},x]$ is finite and shellable.
\end{enumerate}
Then $P$ is a CW poset.
\end{proposition}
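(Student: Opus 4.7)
The plan is to observe that conditions $(1)$ and $(2)$ of the definition of a CW poset hold immediately from hypothesis $(1)$ and the nontriviality of $P$. The substantive content is therefore that the open interval $(\hat{0},x)$ is homeomorphic to a sphere for every $x\in P\smsm\{\hat{0}\}$, equivalently that the order complex $\Delta(\hat{0},x)$ is homeomorphic to $S^{\ell(x)-2}$ where $\ell(x)$ denotes the length of $[\hat{0},x]$. For $x$ an atom this is vacuous with the convention $\Delta(\hat{0},x)=\varnothing=S^{-1}$.

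First I would use hypothesis $(3)$ to transfer the shelling of $[\hat{0},x]$ to a shelling of $\Delta(\hat{0},x)$: the correspondence between maximal chains of $[\hat{0},x]$ and facets of $\Delta(\hat{0},x)$ obtained by deleting the endpoints $\hat{0}$ and $x$ is an order-preserving bijection, so any shelling order on the chains induces one on the facets. Next, hypothesis $(2)$ implies that $\Delta(\hat{0},x)$ is a pseudomanifold. Indeed, a codimension-$1$ face is a chain $\hat{0}<a_1<\cdots<\widehat{a_i}<\cdots<a_{r-1}<x$ with exactly one internal element removed; since the length-$2$ interval $[a_{i-1},a_{i+1}]$ (with the conventions $a_0=\hat{0}$ and $a_r=x$) has cardinality four, there are precisely two ways to fill the gap, i.e.\ exactly two facets contain the given codimension-$1$ face.

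Along the way I would need that $[\hat{0},x]$ is graded so that $\Delta(\hat{0},x)$ is pure of the expected dimension; this follows by iterating the diamond condition, since two maximal chains of different lengths in a common subinterval would eventually produce a length-$2$ subinterval violating hypothesis $(2)$. The closing step is to invoke the classical theorem of Danaraj and Klee asserting that every shellable pseudomanifold of dimension $d$ is homeomorphic to the standard sphere $S^d$; applied to $\Delta(\hat{0},x)$ this yields the desired conclusion.

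The main obstacle is the bookkeeping in the pseudomanifold check, specifically the careful handling of the endpoints $\hat{0}$ and $x$ when applying the diamond hypothesis to length-$2$ subintervals, together with the verification of purity. Once these combinatorial translations are in place, the topological conclusion is an immediate appeal to the Danaraj--Klee theorem, so all the genuine work lies in extracting shellability and the pseudomanifold property for $\Delta(\hat{0},x)$ from the poset-theoretic hypotheses on $P$.
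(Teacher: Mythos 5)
The paper does not prove this proposition; it is stated as a cited result from Bj\"orner's paper on CW posets, so there is no in-text proof to compare against. Your route --- transferring the shelling of $[\hat{0},x]$ to a shelling of $\Delta(\hat{0},x)$, using the diamond condition to see that $\Delta(\hat{0},x)$ is a pseudomanifold without boundary (each codimension-one chain lies in exactly two facets), and invoking Danaraj--Klee to conclude it is a sphere --- is exactly the argument Bj\"orner gives, so the approach is the right one and the essential steps are all present.

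One intermediate claim is incorrect as stated: gradedness of $[\hat{0},x]$ does \emph{not} follow by ``iterating the diamond condition.'' The diamond condition governs only intervals whose \emph{longest} chain has length two, and it places no constraint on a covering pair $u\lessdot v$ embedded in a longer ambient interval. For a concrete counterexample, take the bounded poset on $\{\hat{0},a,b,e,c,f,\hat{1}\}$ with covers $\hat{0}\lessdot a$, $\hat{0}\lessdot b$, $\hat{0}\lessdot e$, $a\lessdot c$, $a\lessdot f$, $b\lessdot c$, $b\lessdot f$, $c\lessdot\hat{1}$, $f\lessdot\hat{1}$, $e\lessdot\hat{1}$. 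Every length-two interval ($[\hat{0},c]$, $[\hat{0},f]$, $[a,\hat{1}]$, $[b,\hat{1}]$) has exactly four elements, yet $[\hat{0},\hat{1}]$ contains maximal chains of lengths two and three, so it is not graded. The fix is simply to observe that you do not need to derive gradedness: in Bj\"orner's usage (and implicitly in this paper), ``shellable poset'' means graded and pure-shellable, so purity of $\Delta(\hat{0},x)$ is part of hypothesis (3). Replace the attempted derivation with an appeal to that convention, and the rest of the proof goes through.
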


With this proposition in hand, we now may conclude 
the following about the structure of $P_N$, 
the poset of admissible symbols.  

\begin{theorem}\label{PNCWposet}
The poset of admissible symbols $P_N$ is a CW poset.
\end{theorem}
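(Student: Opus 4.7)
The plan is to apply Bjorner's Proposition 2.2 cited just above. Two of its three hypotheses are essentially free: the poset $P_N$ has the prescribed minimum $\hat 0 = (\varnothing,1)$ by construction, and Corollary \ref{finshell} already establishes that every principal order ideal $[\hat 0,(I,m)]$ is both finite and shellable. The substantive work lies in verifying the diamond condition, that every length-two interval $[(J,n),(I,m)]$ in $P_N$ has exactly four elements.

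Since each covering relation in $P_N$ adds exactly one index to the upper symbol, the existence of a length-two chain from $(J,n)$ to $(I,m)$ forces $|I\smsm J|=2$, so I would write $I\smsm J=\{\ell_1,\ell_2\}$ with $\ell_1<\ell_2$ and invoke Lemma \ref{UniqueRep} to fix the unique minimum-cardinality $C\subseteq\{\ell_1,\ell_2\}$ with $n=\g(x_C m)$. Any middle element $(K,p)$ must then satisfy $K=J\cup\{\ell_i\}$ for some $i\in\{1,2\}$, and the cover description in Remarks \ref{StableRemarks}(2) restricts $p$ to $\{m,\g(x_{\ell_{3-i}}m)\}$. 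This yields at most four candidate middle elements, which I would then sift according to the four values of $C$.

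The heart of the argument is the ensuing case analysis. When $C=\varnothing$, so $n=m$, the candidates $(J\cup\{\ell_i\},m)$ are both admissible (their index sets are contained in $I$), while $(J\cup\{\ell_i\},\g(x_{\ell_{3-i}}m))$ is ruled out because the identity $n=\g(x_{\ell_i}\g(x_{\ell_{3-i}}m))=\g(x_{\ell_1}x_{\ell_2}m)$ would force $\max(m)\le\ell_1$, contradicting admissibility of $(I,m)$. When $C=\{\ell_1,\ell_2\}$, so $n=\g(x_{\ell_1}x_{\ell_2}m)$, the minimality of $|C|$ rules out the candidates with $p=m$, leaving exactly the two pairs $(J\cup\{\ell_i\},\g(x_{\ell_{3-i}}m))$, whose admissibility follows from a direct computation with the stable decomposition.

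The main obstacle I anticipate is in the intermediate cases $C=\{\ell_1\}$ and $C=\{\ell_2\}$, which are symmetric. Taking $C=\{\ell_1\}$ so that $n=\g(x_{\ell_1}m)$, the candidate $(J\cup\{\ell_1\},m)$ is always a middle element, but the identity of the second middle element is governed by the dichotomy $\ell_2\ge\max(n)$ versus $\ell_2<\max(n)$. In the first scenario, the associativity of $\g$ combined with the identity $\g(x_\ell w)=w$ whenever $\ell\ge\max(w)$ yields $\g(x_{\ell_1}x_{\ell_2}m)=\g(x_{\ell_1}m)=n$, which validates $(J\cup\{\ell_1\},\g(x_{\ell_2}m))$ as a middle element, while admissibility of $(J\cup\{\ell_2\},n)$ fails because $\max(J\cup\{\ell_2\})=\ell_2\ge\max(n)$. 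In the second scenario, the admissibility conclusions reverse, and exactly the complementary pair of candidates is valid. The otherwise troublesome degeneracy $\g(x_{\ell_1}m)=\g(x_{\ell_2}m)$ is precluded by the uniqueness clause of Lemma \ref{UniqueRep}, which forbids two distinct minimum-cardinality representations. Summed over the four cases, each interval contributes exactly two middle elements, completing the verification of the diamond property.
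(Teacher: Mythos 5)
Your approach is the same as the paper's: invoke Bj\"orner's Proposition 2.2, dispose of hypotheses (1) and (3) using the construction of $\hat 0$ and Corollary \ref{finshell}, and verify the diamond condition for length-two intervals. Your organization of the diamond check by the unique minimal $C$ from Lemma \ref{UniqueRep} is a bit tidier than the paper's, which branches first on where $\{i_0,i_1\}$ sit relative to the top two exponent-carrying indices $\{i_2,i_3\}$ of $m$ and then handles coincidences like $\g(x_{i_0}x_{i_1}m)=\g(x_{i_0}m)$ as they arise; your normalization sidesteps those coincidences up front, but the underlying admissibility computations are the same.

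There is, however, a concrete gap. The claim that ``each covering relation in $P_N$ adds exactly one index to the upper symbol,'' from which you derive $|I\smsm J|=2$, is false for the bottom covers: $\hat 0=(\varnothing,1)\ld(\varnothing,m)$ leaves the index set unchanged. The length-two intervals $[\hat 0,(\{i\},m)]$ therefore fall entirely outside your casework. The paper treats them as a separate Case 1: the only middle elements are $(\varnothing,m)$ and $(\varnothing,\g(x_im))$, distinct because admissibility of $(\{i\},m)$ gives $i<\max(m)$ and hence $\g(x_im)\ne m$. You need to add that case. A smaller imprecision worth noting: in the $C=\{\ell_1\}$ scenario you write $\max(J\cup\{\ell_2\})=\ell_2$, which holds only when $\ell_2=\max(I)$. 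When $\ell_2<\max(I)$, one must first observe that $\max(I)\in J$, so admissibility of $(J,n)$ already forces $\max(n)>\max(I)>\ell_2$ and you land in the second branch of your dichotomy; the conclusion survives but the step is hidden. With the $\hat 0$ case added and that step made explicit, the argument goes through.
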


\begin{proof}
The poset $P_N$ has a least element by construction and each of its 
intervals $\left[\hat{0},(I,m)\right]$ is finite and shellable by 
Corollary \ref{finshell}.  Thus, it remains to show that every 
closed interval in $P_N$ of length two has cardinality four.  

\emph{Case 1:} 
Let $(J,n)=\hat{0}$ so that the set $I$ is a singleton.   
It follows that the only poset elements in the open interior of the 
interval are $(\varnothing,m)$ and $(\varnothing,\g(x_I m))$. 

\emph{Case 2:}
Let $(J,n)\ne\hat{0}$ and suppose that 
$\left[(J,n),(I,m)\right]$ is a closed interval of length 
two in the poset of admissible symbols, $P_N$.  
Since the interval is of length two, the set $J$ 
has the form $I\smsm\{i_0,i_1\}$ for some $i_0<i_1\in{I}$.  
Further, any poset element in the interval must have 
either $I\smsm\{i_0\}$ or $I\smsm\{i_1\}$ as its first 
coordinate, for these sets are the only subsets of $I$ which 
contain $I\smsm\{i_0,i_1\}$.  

Write $m=m'x_{i_2}x_{i_3}$ where 
$\max(m')\le i_2\le i_3$.  We must now consider 
each of the possible orderings for the elements 
of the (multi) set $\{i_0,i_1,i_2,i_3\}$ to ascertain 
the choices available for the monomial $n$.  Our assumptions of the 
inequalities $i_0<i_1$ and $i_2\le i_3$ together with 
the admissibility of the symbol $(I,m)$ imply that 
$i_1\le\max(I)<\max(m)\le i_3$.  Hence, determining 
the number of orderings amounts to producing 
a count of the number of orderings for elements of 
the set $\{i_0,i_1,i_2\}$, of which there are three, 
since $i_0<i_1$.  

\emph{Subcase 2.1:}
Suppose that $i_0<i_1<i_2\le i_3$.  

If $n=m$, then the poset elements which are contained in the 
open interior of the interval are forced to be 
$(I\smsm\{i_0\},m)$ and $(I\smsm\{i_1\},m)$.  

If $n=\g(x_{i_0}m)$ and $\max(I\smsm\{i_0\})<\max(\g(x_{i_0}m))$ 
then the symbol $(I\smsm\{i_0\},\g(x_{i_0}m))$ is admissible, 
so that it is in the open interior of the 
interval along with the admissible symbol $(I\smsm\{i_1\},m)$.  
The symbol $(I\smsm\{i_0\},m)$ is not comparable to 
$(I\smsm\{i_0\},\g(x_{i_0}m))$ due to the absence of the value 
$i_0$.  The symbol $(I\smsm\{i_1\},\g(x_{i_1}m))$ is also 
not comparable to $(I\smsm\{i_0\},\g(x_{i_0}m))$, for if it were 
then either $\g(x_{i_0m})=\g(x_\varnothing\g(x_{i_1}m))=\g(x_{i_1}m)$ 
or $\g(x_{i_0}x_{i_1}m)=n=\g(x_{i_0}m)$.  The first equality is 
impossible since Lemma \ref{UniqueRep} guarantees that 
$\{i_0\}$ is the unique set containing one element 
for which $n=\g(x_{i_0}m)$.  The second equality also can not occur 
since Lemma 1.2 of \cite{EK} guarantees monomial equality 
$\g(x_{i_0}x_{i_1}m)=\g(x_{i_0}m)$ if and only if 
$\max(n)\le\min(x_{i_1})=i_1$, which would contradict the assumption 
that $(I\smsm\{i_0\},n)$ is an admissible symbol. 

If $n=\g(x_{i_0}m)$ and $\max(I\smsm\{i_0\})\ge\max(\g(x_{i_0}m))$ 
then the symbol $(I\smsm\{i_0\},\g(x_{i_0}m))$ is not admissible and 
is not an element of $P_N$.  
However, we are assuming that the symbol 
$(I\smsm\{i_0,i_1\},\g(x_{i_0}m))$ is admissible, so that 
$\max(\g(x_{i_0}m))\le i_1$ and via Lemma 1.2 of \cite{EK}, 
we have the monomial equality $\g(x_{i_0}\g(x_{i_1}m))=\g(x_{i_1}m)$.  
Therefore, $(I\smsm\{i_0\},\g(x_{i_0}m))=(I\smsm\{i_0\},\g(x_{i_0}x_{i_1}m))$ 
and the symbols $(I\smsm\{i_1\},m)$ and $(I\smsm\{i_1\},\g(x_{i_1}m))$ 
are each contained in the interval.  Since $n=\g(x_{i_0}m)$, 
the symbol $(I\smsm\{i_0\},m)$ is not comparable to 
$(I\smsm\{i_0\},\g(x_{i_0}m))$.  

If $n=\g(x_{i_1}m)$ then the symbol 
$(I\smsm\{i_0\},m)$ is certainly contained in 
the closed interval.  Further, $(I\smsm\{i_1\},\g(x_{i_1}m))$ 
must be admissible for were it not, then the assumption 
of admissibility for $(I\smsm\{i_0,i_1\},\g(x_{i_1}m))$ 
implies that $i_0\ge\max(\g(x_{i_1}m))\ge\min(\g(x_{i_1}m))\ge i_1$, 
a contradiction to the initial stipulation that $i_0<i_1$.  
The symbol $(I\smsm\{i_1\},m)$ is incomparable to 
$(I\smsm\{i_0,i_1\},\g(x_{i_1}m))$ and were 
$(I\smsm\{i_0\},\g(x_{i_0}m))$ comparable to 
$(I\smsm\{i_0,i_1\},\g(x_{i_1}m))$, then either 
$\g(x_{i_1m})=\g(x_\varnothing\g(x_{i_0}m))=\g(x_{i_0m})$ 
or $\g(x_{i_0}x_{i_1}m)=n=\g(x_{i_0}m)$.  The first 
equality contradicts Lemma \ref{UniqueRep} and 
the second may be used to arrive at a contradiction to 
the admissibility of $(I\smsm\{i_1\},\g(x_{i_1}m))$.  
These arguments are similar to those used in when 
$n=\g(x_{i_0}m)$ and $\max(I\smsm\{i_0\})<\max(\g(x_{i_0}m))$.  

If $n=\g(x_{i_0}x_{i_1}m)$ and $n=\g(x_{i_0}x_{i_1}m)\ne\g(x_{i_0}m)$ 
then the symbols $(I\smsm\{i_0\},\g(x_{i_0}m))$ and 
$(I\smsm\{i_1\},\g(x_{i_1}m))$ are admissible and are contained
in the open interior of the interval.  Clearly, the symbols 
$(I\smsm\{i_0\},m)$ and $(I\smsm\{i_1\},m)$ are not comparable to 
$(I\smsm\{i_0,i_1\},\g(x_{i_0}x_{i_1}m))$ in this instance.  
If $n=\g(x_{i_0}x_{i_1}m)$ and $n=\g(x_{i_0}x_{i_1}m)=\g(x_{i_0}m)$, 
then we are reduced to an already resolved case.  

For each of these four choices of $n$, the interval has four elements.  

\emph{Subcase 2.2} 
We now consider the two remaining orderings 
$i_0<i_2\le i_1<i_3$ and $i_2\le i_0<i_1<i_3$.  
Under each of these orderings, 
we have $\deg_{x_{i_3}}(m)=1$ and in light of Lemma 
1.3 of \cite{EK} if $n\ne m$ we have $\max(n)<\max(m)=i_3$ 
and in turn that $\max(n)\le i_1$.  

If $n=m$, then the poset elements which are contained in the 
open interior of the interval are forced to be 
$(I\smsm\{i_0\},m)$ and $(I\smsm\{i_1\},m)$.  

If $n=\g(x_{i_0}m)$ then $\max(n)\le i_1$ implies that 
the symbol $(I\smsm\{i_0\},\g(x_{i_0}m))$ is not admissible and 
is not an element of $P_N$.  
However, we are assuming that the symbol 
$(I\smsm\{i_0,i_1\},\g(x_{i_0}m))$ is admissible, so that 
$\max(\g(x_{i_0}m))\le i_1$ and again using Lemma 1.2 of \cite{EK}, 
we have $\g(x_{i_0}\g(x_{i_1}m))=\g(x_{i_0}m)$.  
Therefore, $(I\smsm\{i_0\},\g(x_{i_0}m))=(I\smsm\{i_0\},\g(x_{i_0}x_{i_1}m))$ 
and the symbols $(I\smsm\{i_1\},m)$ and $(I\smsm\{i_1\},\g(x_{i_1}m))$ 
are each contained in the interval.  Since $n=\g(x_{i_0}m)$, 
the symbol $(I\smsm\{i_0\},m)$ is not comparable to 
$(I\smsm\{i_0\},\g(x_{i_0}m))$.  

If $n=\g(x_{i_1}m)$ then the symbol 
$(I\smsm\{i_0\},m)$ is certainly contained in 
the closed interval.  Further, $(I\smsm\{i_1\},\g(x_{i_1}m))$ 
must be admissible for were it not, then the assumption 
of admissibility for $(I\smsm\{i_0,i_1\},\g(x_{i_1}m))$ 
implies that $i_0\ge\max(\g(x_{i_1}m))\ge\min((\g(x_{i_1}m)))\ge i_1$, 
a contradiction to the initial stipulation that $i_0<i_1$.  
The symbol $(I\smsm\{i_1\},m)$ is incomparable to 
$(I\smsm\{i_0,i_1\},\g(x_{i_1}m))$ and were 
$(I\smsm\{i_0\},\g(x_{i_0}m))$ comparable to 
$(I\smsm\{i_0,i_1\},\g(x_{i_1}m))$, then either 
$\g(x_{i_1m})=\g(x_\varnothing\g(x_{i_0}m))=\g(x_{i_0m})$ 
or $\g(x_{i_0}x_{i_1}m)=n=\g(x_{i_0}m)$.  The first 
equality contradicts Lemma \ref{UniqueRep} and 
the second may be used to arrive at a contradiction to 
the admissibility of $(I\smsm\{i_1\},\g(x_{i_1}m))$.  
These arguments are similar to those used in the case 
when $n=\g(x_{i_0}m)$ and $\max(I\smsm\{i_0\})<\max(\g(x_{i_0}m))$.  

Again, for each of these three choices of $n$, the interval has four elements.  

\end{proof}

We now analyze the vector spaces which are present in the sequence 
$\mathcal{D}(P_N)$ at the level of individual poset elements.  
In order to do so, we recall the following combinatorial results.  
As is standard, we write $\bar{P}=P\smsm\{\hat{0},\hat{1}\}$.

\begin{theorem}[\cite{BjShell,BjWachsNP1}]
If a bounded poset $P$ is EL-shellable, then the lexicographic 
order of the maximal chains of $P$ is a shelling of $\Delta(P)$.  
Moreover, the corresponding order of the maximal chains of $\bar{P}$ 
is a shelling of $\Delta(\bar{P})$.
\end{theorem}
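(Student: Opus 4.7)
The plan is to follow the standard Bj\"orner--Wachs argument built around the descent-swap mechanism provided by the EL-labeling. Given the edge labeling $\lambda:\mathcal{E}(P)\to\Lambda$, each maximal chain $\sigma$ of $P$ carries a label word $\lambda(\sigma)$, and I would take as the proposed shelling order $F_1,F_2,\ldots,F_t$ the facets of $\Delta(P)$ ordered lexicographically by these label words (with ties broken arbitrarily). The verification then reduces to showing that for every $k\ge 2$, the intersection $\left(\bigcup_{i<k}F_i\right)\cap F_k$ is a nonempty union of codimension-one faces of $F_k$.

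First I would establish the descent-swap lemma: if a maximal chain $\tau=\hat{0}=x_0\ld x_1\ld\cdots\ld x_n=\hat{1}$ is not the unique increasing chain in some subinterval $[x_{i-1},x_{i+1}]$ (equivalently $\lambda(\tau)$ has a descent at position $i$, or some label is strictly larger than the one following it), then by applying the EL-property to $[x_{i-1},x_{i+1}]$ we can replace $x_i$ by the unique intermediate element $x'_i$ prescribed by the rising chain of that interval, producing a maximal chain $\tau'$ that agrees with $\tau$ outside position $i$ and satisfies $\lambda(\tau')<\lambda(\tau)$ lexicographically. Thus the codimension-one face $\tau\smsm\{x_i\}$ belongs to an earlier facet.

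Next I would identify, for the facet $F_k$ corresponding to $\tau$, the restriction set $\mathcal{R}(\tau)$ consisting of those positions $i$ at which $\lambda(\tau)$ has a descent, and prove $\left(\bigcup_{i<k}F_i\right)\cap F_k=\bigcup_{i\in\mathcal{R}(\tau)}(F_k\smsm\{x_i\})$. The forward inclusion is the swap lemma above. For the reverse inclusion, suppose $v=x_j\in F_k\cap F_\ell$ for some $\ell<k$ with $j\notin\mathcal{R}(\tau)$; then $\lambda(\tau)$ rises across position $j$, and I would use the uniqueness clause of the EL-property on the intervals $[\hat{0},x_j]$ and $[x_j,\hat{1}]$ to argue that any lex-smaller chain sharing the subchain $\tau\smsm\{x_j\}$ must actually agree with $\tau$, contradicting $\ell<k$. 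Nonemptiness of $\mathcal{R}(\tau)$ for $k\ge 2$ follows because $\tau$ cannot be the unique rising chain of $[\hat{0},\hat{1}]$ once there is an earlier facet.

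Finally, for the statement about $\bar{P}$, I would observe that truncating each maximal chain of $P$ by removing $\hat{0}$ and $\hat{1}$ yields a bijection onto the facets of $\Delta(\bar{P})$, and that the lexicographic order descends to this truncation since the two endpoint covers contribute the same leading and trailing labels for the relevant comparisons (or can be ignored without affecting lex order on the middle labels). The restriction sets $\mathcal{R}(\tau)$ sit inside $\{1,\ldots,n-1\}$, so the codimension-one face description transports verbatim to $\Delta(\bar{P})$. The main obstacle is the careful bookkeeping in the reverse inclusion: one must argue that a face omitting a non-descent vertex cannot already lie in an earlier facet, which requires combining the two uniqueness assertions of the EL-property on the subintervals below and above the omitted element and ruling out the possibility of an alternative rising chain that would contradict lex minimality.
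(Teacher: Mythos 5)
This theorem is stated in the paper without proof; it is a recalled result of Bj\"orner and Bj\"orner--Wachs cited to the literature, so there is no in-paper argument to compare against. Your outline follows the standard route (descent-swap plus identification of the restriction set), and the swap lemma itself is correctly stated. However, the verification of
\[
\Bigl(\bigcup_{i<k}F_i\Bigr)\cap F_k=\bigcup_{j\in\mathcal{R}(\tau)}\bigl(F_k\smsm\{x_j\}\bigr)
\]
has a genuine gap in the containment $\subseteq$. As written, your argument only rules out codimension-one faces $F_k\smsm\{x_j\}$ with $j$ an ascent from lying in an earlier facet. That does not yet show that an arbitrary face $\sigma\subsetneq F_k$ lying in some earlier $F_\ell$ must omit a descent element: it is conceivable a priori that a codimension-two face containing every descent element sits in an earlier facet even though neither of the two codimension-one faces above it does. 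The correct argument is the multi-gap one: if $\sigma$ contains all descent elements of $\tau$, then on each interval $[y_{t-1},y_t]$ cut out by consecutive elements of $\sigma\cup\{\hat0,\hat1\}$ the chain $\tau$ is strictly increasing, hence by the EL-property it is the unique and lexicographically least maximal chain of that interval. Any other facet $\tau'\supseteq\sigma$ must differ from $\tau$ inside one of these gaps; in the first gap where they differ, $\tau'$'s label block is lex-larger, so $\lambda(\tau')>\lambda(\tau)$ and $\tau'$ cannot be earlier. This handles $\sigma$ of every codimension.

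A second, smaller issue: for the codimension-one case you invoke EL-uniqueness on $[\hat0,x_j]$ and $[x_j,\hat1]$, but a competing chain through $\tau\smsm\{x_j\}$ need not pass through $x_j$, so those intervals are not the ones to use. The right interval is $[x_{j-1},x_{j+1}]$, through which both chains pass; since $j$ is an ascent, $\tau$ restricted there is the unique rising chain and is lex-smallest, forcing the competitor to be lex-larger. (You have also interchanged which containment is the ``forward'' one and which the swap lemma establishes, but that is purely a labeling slip.) Your treatment of $\bar P$ is fine once the above is repaired, since facets of $\Delta(\bar P)$ correspond bijectively to maximal chains of $P$ and the descent positions lie strictly between $\hat0$ and $\hat1$.
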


\begin{theorem}[\cite{BjWachsNP1}]\label{BjWachsELBasis}
Suppose that $P$ is a poset for which $\hat{P}=P\cup\{\hat{0},\hat{1}\}$ 
admits an EL-labeling.  Then $P$ has the homotopy 
type of a wedge of spheres.  Furthermore, for any fixed EL-labeling:
\begin{enumerate}
\item[i.] $\Ho_i(\Delta(P),\mbb{Z})\cong\mbb{Z}^{\#\textup{falling chains of length }i+2}$,
\item[ii.] bases for $i$-dimensional homology (and cohomology) are induced by the falling
chains of length $i+2$.
\end{enumerate}
\end{theorem}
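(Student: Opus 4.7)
The plan is to deduce both statements from the shelling package associated to an EL-labeling, after passing to $\hat{P}$ whose order complex $\Delta(\hat{P})$ is a cone over $\Delta(P)$ and whose facets biject with maximal chains of $\hat{P}$. First I would invoke the preceding theorem to conclude that the lexicographic order on the maximal chains of $\hat{P}$ induced by the EL-labeling is a shelling of $\Delta(\hat{P})$ and that, after deleting the cone points $\hat{0}$ and $\hat{1}$, the same order provides a shelling of $\Delta(\bar{P})=\Delta(P)$. Since a shellable complex is homotopy equivalent to a wedge of spheres whose dimensions are those of the so-called homology facets, (i) reduces to counting those facets and (ii) to identifying the cycles they carry.

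Next I would characterize the homology facets. For a shelling $F_1,F_2,\dots,F_t$, the facet $F_k$ contributes to homology exactly when its entrance subcomplex $\bigl(\bigcup_{j<k}F_j\bigr)\cap F_k$ equals the full boundary $\partial F_k$. The combinatorial claim I would prove is that, in an EL-shelling, a maximal chain $\sigma$ of $\hat{P}$ is a homology facet if and only if $\lambda(\sigma)$ is strictly decreasing, i.e.\ $\sigma$ is a falling chain. The forward direction uses the unique increasing chain property: given any interior vertex $v$ of $\sigma$, consider the length-two subinterval of $\hat{P}$ straddling $v$; unless the two labels at $v$ are already decreasing, the unique rising chain in that subinterval produces a lexicographically earlier maximal chain of $\hat{P}$ whose facet contains the codimension-one face $\sigma\smsm\{v\}$, showing this face lies in the entrance subcomplex. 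Conversely, if the pair of labels at $v$ is decreasing, a short lexicographic argument shows that no earlier facet can contain $\sigma\smsm\{v\}$.

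Given the identification, a falling maximal chain of $\hat{P}$ of length $i+2$ corresponds, after removing the cone points, to an $i$-dimensional homology facet of $\Delta(P)$. Summing the wedge-of-spheres decomposition yields $\Ho_i(\Delta(P),\mbb{Z})\cong\mbb{Z}^{f_i}$ where $f_i$ is the number of falling maximal chains of length $i+2$, proving (i). For (ii), each homology facet $F_k$ carries a distinguished $i$-cycle, namely the unique cycle supported on $F_k\cup\bigcup_{j<k}F_j$ whose restriction to $F_k$ is the fundamental class of $\partial F_k$; these cycles form a basis of $\Ho_i$, and their Kronecker duals form the corresponding cohomology basis.

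The main obstacle is the combinatorial step identifying falling chains with homology facets. The delicate part is the forward direction: one must show that for every interior vertex of a non-falling chain, the unique increasing maximal chain of the surrounding length-two interval, when spliced into $\sigma$, genuinely precedes $\sigma$ in the lexicographic order on \emph{all} of $\hat{P}$, not merely within the subinterval. This is precisely the content that the EL-property transfers local rising information to a global lexicographic statement, and it is where the Björner-Wachs argument does its real work.
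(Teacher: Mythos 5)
The paper does not prove this statement; it cites it directly from Bj\"orner and Wachs \cite{BjWachsNP1}, so there is no in-paper argument to compare against. Your overall strategy --- pass to the shelling of $\Delta(\bar P)$ furnished by the preceding theorem, identify the homology facets as those whose entire boundary lies in the entrance subcomplex, characterize them combinatorially via the EL-labeling, and read off the wedge-of-spheres decomposition and its homology basis --- is precisely the standard Bj\"orner--Wachs route and is the right way to proceed.

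However, the key combinatorial step is stated backwards in both directions, and this is a genuine error rather than a typo, since it propagates through the whole characterization. In an EL-shelling, the codimension-one face $\sigma\smsm\{v\}$ (with $v=x_i$ interior) lies in the entrance subcomplex precisely when the labels \emph{descend} at $v$. In that case the subchain of $\sigma$ through $v$ is not the unique rising chain of $[x_{i-1},x_{i+1}]$; replacing $v$ by the middle element $v'$ of the unique rising chain produces a facet $\sigma'$ whose label agrees with $\lambda(\sigma)$ at positions $<i$ and is lexicographically smaller on the block $(i,i+1)$, hence $\lambda(\sigma')<\lambda(\sigma)$ globally and $\sigma'\supseteq\sigma\smsm\{v\}$. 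Conversely, if the labels \emph{ascend} at $v$, the subchain through $v$ is itself the unique rising chain in $[x_{i-1},x_{i+1}]$, so by the lexicographic-minimality clause of the EL-axiom any competing facet $(\sigma\smsm\{v\})\cup\{v''\}$ with $v''\ne v$ has a strictly larger label and $\sigma\smsm\{v\}$ is a new face. You assert the opposite in each case (``unless the labels at $v$ are already decreasing\ldots\ this face lies in the entrance subcomplex'' and ``if the pair of labels at $v$ is decreasing\ldots\ no earlier facet can contain $\sigma\smsm\{v\}$''). As written, your version would make the entrance subcomplex of a falling facet empty, contradicting the definition of a shelling for any facet beyond the first. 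Once the descent/ascent correspondence is reversed, the characterization ``homology facet $\iff$ falling chain'' and the resulting counts and bases in (i) and (ii) follow as you outline.
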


In the analysis that follows, we again 
examine the dual poset $P_N^*$ and focus our 
attention on the collection of closed intervals 
of the form $[(I,m),\hat{0}]$, to each of which we 
apply Theorem \ref{BjWachsELBasis}.  
Indeed, for each admissible symbol $(I,m)\in P_N^*$ 
where $|I|=q$, the open interval $((I,m),\hat{0})$ 
is homeomorphic to a sphere of dimension $q-1$ 
since $P_N$ is a CW poset.  
Further, the EL-labeling of $[(I,m),\hat{0}]$ guarantees that 
the unique generator of $\Ho_{q-1}(\Delta_{I,m},\Bbbk)$ 
is induced by a unique falling chain of length $q+1$.  
In the discussion that follows, we use the EL-shelling of 
Definition \ref{ELShelling} to produce a canonical generator 
of $\Ho_{q-1}(\Delta_{I,m},\Bbbk)$ 
as a linear combination in which each facet of 
$\Delta_{I,m}$ occurs with coefficient $+1$ or $-1$.  

To begin, consider a maximal chain $(I,m)\ld\sigma\ld\hat{0}$ 
which is of length $q+1$ and appears in the dual 
closed interval $\left[(I,m),\hat{0}\right]$ and 
write the label of this chain as 
\begin{eqnarray}\label{labelseq}
\left(
l^\sigma_1,
\ldots,
l^\sigma_q,
0
\right).
\end{eqnarray}
We note that $I=\{|l^\sigma_1|,\ldots,|l^\sigma_q|\}$ and write 
\begin{eqnarray}\label{facetsigns}
\varepsilon_\sigma & = & \sgn(\rho_\sigma)\cdot\sgn\left(\prod_{t=1}^q l_q\right)
\end{eqnarray}
where $\rho_\sigma\in\Sy_q$ is the permutation arranging the sequence
$$
|l^\sigma_1|,\ldots,|l^\sigma_q|
$$
in \emph{increasing} order.  
We endow the corresponding chain $\sigma$ in 
$\left((I,m),\hat{0}\right)$ with this sign 
$\varepsilon_\sigma$ and refer to it as the \emph{sign}
of $\sigma$.

The unique maximal chain $\tau$ in $\left[(I,m),\hat{0}\right]$ 
which has a decreasing label is 
the chain consisting of admissible symbols 
having at each stage a different monomial as their second coordinate 
and the sequence of sets 
$$
I,I\smsm\{i_q\},I\smsm\{i_{q-1},i_q\},
\ldots,
\{i_1,i_2\},\{i_1\},\varnothing
$$
as their first coordinate.  The unique 
falling chain $\tau\in\left[(I,m),\hat{0}\right]$ is 
therefore 
$$
(I,m)\ld
(I_{q},m_q)\ld 
(I_{q-1,q},m_{q-1,q})\ld
\cdots 
\ld(I_{2,\ldots,q},m_{2,\ldots,q})\ld
(\varnothing,m_{1,\ldots,q})\ld
\hat{0}
$$
where $I=\{i_1,\ldots,i_q\}$ with $i_1<\ldots<i_q$ and 
for $j=1,\ldots, q$, the set $I_{j,\ldots,q}=I\smsm\{i_j,\ldots,i_q\}$ and 
the monomial $m_{j,\ldots,q}=\g(x_{i_j}\cdots x_{i_q}m)$.  
The label of the chain $\tau$ is therefore  
$$\left(i_q,\ldots,i_1,0\right)$$ and is decreasing.  If there 
were another such chain with decreasing label, 
then such a chain would be counted by Theorem 
\ref{BjWachsELBasis} and $\left(\hat{0},(I,m)\right)$ would 
not have the homotopy type of a sphere, a contradiction 
to the fact that $P_N$ is a CW poset.  In the context of 
the shelling order produced by the EL-shelling 
above, the chain $\tau$ appears lexicographically last 
among all maximal chains in the dual interval 
and is therefore the unique homology facet of 
$\Delta_{I,m}$.  

\begin{definition}
For an admissible symbol $(I,m)$, set 
$$
  f(I,m)
  =\ds\sum_{\sigma\in\left((I,m),\hat{0}\right)}
   \varepsilon_\sigma\cdot\sigma,
$$
the linear combination of all maximal chains of 
the open interval $\left((I,m),\hat{0}\right)$ with 
coefficients given by (\ref{facetsigns}).
\end{definition}

Viewing the maximal chains of $\left((I,m),\hat{0}\right)$ 
as facets in the order complex $\Delta_{I,m}$ 
we now establish the following.

\begin{lemma}\label{CycleLemma}
The sum $f(I,m)$ is a $q-1$-dimensional cycle in $\Ho_{q-1}(\Delta_{I,m},\Bbbk)$ 
which is not the boundary of any $q$-dimensional face. 
\end{lemma}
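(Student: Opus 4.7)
The plan is to address the two assertions separately, beginning with the easier ``not a boundary'' claim. Since $P_N$ is a CW poset by Theorem \ref{PNCWposet}, the order complex $\Delta_{I,m}$ of the open interval $((I,m),\hat 0)$ in $P_N^*$ is a $(q-1)$-dimensional simplicial complex homeomorphic to a $(q-1)$-sphere, where $q=|I|$. Consequently $\Delta_{I,m}$ has no $q$-dimensional faces at all, so the group of simplicial $q$-chains is zero and $f(I,m)$ is vacuously not the boundary of any $q$-face. Since each facet appears in $f(I,m)$ with a nonzero coefficient $\pm 1$, the chain is nonzero, and once it is shown to be a cycle it will automatically generate the one-dimensional group $\Ho_{q-1}(\Delta_{I,m},\Bbbk)$.

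The remaining task is to verify $\partial f(I,m)=0$. The thin condition intrinsic to a CW poset (every length-two interval has four elements) makes $\Delta_{I,m}$ a pseudomanifold, so every codimension-one face lies in exactly two facets. The vanishing of $\partial f(I,m)$ therefore reduces to showing that any two facets $\sigma,\sigma'$ sharing a codimension-one face satisfy $\varepsilon_\sigma=-\varepsilon_{\sigma'}$; this is precisely the pairwise cancellation of the two contributions to the boundary coefficient of the common face, whose simplicial orientation sign $(-1)^{j-1}$ is the same for both $\sigma$ and $\sigma'$, since the element removed sits at the same position $j$ in each chain.

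To verify this pairing I split according to whether $j=q$ or $1\le j\le q-1$. When $j=q$, the dual covering $\alpha_{q-1}\ld\hat 0$ passes through an interval $[(\{k\},n_0),\hat 0]$ whose two intermediate elements are $(\varnothing,n_0)$ and $(\varnothing,\g(x_kn_0))$; the two facets then agree outside position $q$, while the labels at position $q$ have the same absolute value $k$ and opposite signs. Thus $\rho_\sigma=\rho_{\sigma'}$ and $\sgn(\prod l_t)$ flips by a single factor, giving $\varepsilon_\sigma=-\varepsilon_{\sigma'}$. When $1\le j\le q-1$, I use the enumeration of length-two intervals in the proof of Theorem \ref{PNCWposet} to handle two sub-configurations. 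If $\alpha_j$ and $\alpha'_j$ have distinct first coordinates, the absolute values at positions $j$ and $j+1$ transpose between $\sigma$ and $\sigma'$, so $\sgn(\rho_{\sigma'})=-\sgn(\rho_\sigma)$, while a direct check (using the associativity $\g(w\g(m))=\g(wm)$) shows that $\sgn(\prod l_t)$ is preserved. If $\alpha_j$ and $\alpha'_j$ share a first coordinate, then $\rho_\sigma=\rho_{\sigma'}$ and exactly one label sign at positions $j,j+1$ flips, changing $\sgn(\prod l_t)$ by a single factor.

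The main obstacle will be the bookkeeping in the interior case: several sub-cases from the classification of length-two intervals in Theorem \ref{PNCWposet} must be traversed, and in each one the admissibility constraints together with Lemma \ref{UniqueRep} must be invoked to pin down exactly which label-sign flips occur and to rule out spurious configurations. Once these pairwise cancellations are established, $\partial f(I,m)=0$ follows from the pseudomanifold condition and the argument is complete.
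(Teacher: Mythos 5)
Your proof follows essentially the same route as the paper's: both dismiss the boundary claim by noting $\Delta_{I,m}$ has no $q$-faces, and both establish the cycle condition by using the diamond property (every length-two interval has cardinality four) to pair off facets sharing a codimension-one face, then checking $\varepsilon_\sigma=-\varepsilon_{\sigma'}$, splitting into the case where the replaced element is adjacent to $\hat 0$ and the interior case (the paper's $(J_2,n_2)=\hat 0$ versus $(J_2,n_2)\neq\hat 0$). Your treatment of the interior case is somewhat more explicit than the paper's, which simply defers to the classification in Theorem \ref{PNCWposet}; in particular your observation that Lemma \ref{UniqueRep} is what rules out the bad configuration in which both label products carry the same sign is a worthwhile clarification, but the overall argument is the same.
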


\begin{proof}
The maximal chains in the open interval 
$\left((I,m),\hat{0}\right)$ are each of length $q-1$, 
so that no $q$-dimensional faces are present in $\Delta_{I,m}$.  
Thus, $f(I,m)$ cannot be the boundary of a $q$-dimensional face 
of $\Delta_{I,m}$.  

We now show that $f(I,m)$ is a $q-1$-dimensional cycle.  
Suppose that $\sigma$ is a maximal chain in 
$\left((I,m),\hat{0}\right)$ 
and let $(J,n)$ be an element of said chain.  
We exhibit a unique chain $\sigma'$ which also appears in 
$f(I,m)$ and differs from $\sigma$ only at the element $(J,n)$.  

Indeed, consider the chain $(I,m)\ld\sigma\ld\hat{0}$ 
along with its subchain $(J_1,n_1)\ld(J,n)\ld(J_2,n_2)$.  
In the proof of Theorem \ref{PNCWposet}, each closed interval 
of length two was shown to be of cardinality four, and therefore 
there exists a unique $(J',n')\in[(J_1,n_1),(J_2,n_2)]$ which 
is not equal to $(J,n)$.  Defining $\sigma'$ by removing 
$(J,n)$ and replacing it with $(J',n')$, we have 
constructed the desired chain.  

We claim that for the chains $\sigma$ and $\sigma'$, 
the associated signs $\varepsilon_\sigma$ and 
$\varepsilon_{\sigma'}$ are opposite to one another.  

If $(J_2,n_2)=\hat{0}$ then $(J_1,n_1)=(\{j\},n_1)$ for some $j$.
Thus, $(J,n)=(\varnothing,n)$ and $(J',n')=(\varnothing,n')$ so that 
the chains $\sigma$ and $\sigma'$ have the same corresponding 
permutation $\rho$.  Since either $n_1=n$ or $n_1=n'$, 
without loss of generality we assume that $n_1=n$ 
so that $n'=\g(x_jn)$.  Therefore, the subchain 
$(\{j\},n)\ld(\varnothing,n)\ld\hat{0}$ 
has $-j$ as its label, 
while $(\{j\},n)\ld(\varnothing,n')\ld\hat{0}$ 
has $j$ as its label.  This is the only 
difference in the labels $\lambda(\sigma)$ and $\lambda(\sigma')$ 
and $\varepsilon_\sigma\ne\varepsilon_{\sigma'}$ is forced.

If $(J_2,n_2)\ne\hat{0}$ then for each case that appears in 
the classification of intervals of length two described in 
the proof of Theorem \ref{PNCWposet}, we can compute $\varepsilon_\sigma\ne\varepsilon_{\sigma'}$.

When the differential $d$ in the reduced chain complex 
$\CC(\Delta_{I,m})$ is applied to the sum $f(I,m)$, each term 
appears twice with opposite signs, so that $d(f(I,m))=0$ making 
$f(I,m)$ a $q-1$-dimensional cycle in $\Ho_{q-1}(\Delta_{I,m},\Bbbk)$ as claimed.  
\end{proof}

\mysection{Proof of Theorem \ref{stableTheorem}}\label{PRStableProof}
With the choice for the bases of the vector spaces in 
$\mathcal{D}(P_N)$ established, we now turn to the proof that the 
poset $P_N$ supports the minimal free resolution of $R/N$.  
We first analyze the action of the differential of 
$\mathcal{D}(P_N)$ when it is applied to an arbitrary 
basis element $f(I,m)$.  

\begin{lemma}\label{DifferentialAction}
The map $\varphi^{(I,m),(J,n)}_{q+1}$ sends a basic cycle $f(I,m)$ to 
the element $(-1)^{p+\delta_{m,n}}\cdot f(J,n)$, where $I=\{i_1,\ldots,i_q\}$, 
the relationship $I\smsm J=\{i_p\}$ holds and 
\begin{displaymath}
\delta_{m,n}=
\left\{
\begin{array}{ll}
1 & \textrm{if } m\ne n\\
0 & \textrm{otherwise.}
\end{array}
\right.
\end{displaymath} 
\end{lemma}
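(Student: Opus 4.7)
The plan is to compute $\varphi_{q+1}^{\alpha,\lambda}(f(I,m))$ by unwinding the Mayer--Vietoris connecting homomorphism at the chain level, where $\alpha=(I,m)$ and $\lambda=(J,n)$. Setting $A=\D_\lambda$ and $B=\bigcup_{\beta\ld\alpha,\,\beta\ne\lambda}\D_\beta$ gives the Mayer--Vietoris triple (\ref{MVseq}) with $A\cup B=\Delta_\alpha$ and $A\cap B=\Delta_{\alpha,\lambda}$. The first step is to split $f(I,m)=f_A+f_B$, where $f_A$ collects those summands $\varepsilon_\sigma\sigma$ whose underlying chain $\sigma$ has top element $\lambda$. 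Because distinct covers of $\alpha$ in $P_N$ are pairwise incomparable, every maximal chain of $\Delta_\alpha$ whose top is $\lambda$ lies in $A$ but in no $\D_\beta$ with $\beta\ne\lambda$, while every other maximal chain has its top in some $\D_\beta\subseteq B$. Combined with $\partial f(I,m)=0$ from Lemma~\ref{CycleLemma}, this produces $\partial f_A=-\partial f_B$ as a chain in $\Delta_{\alpha,\lambda}$, and this element represents $\delta_{q-1}^{\alpha,\lambda}[f(I,m)]$.

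The second step is to simplify $\partial f_A$ at the chain level. Every $\sigma$ contributing to $f_A$ has the form $\tau\cup\{\lambda\}$ for a maximal chain $\tau$ of $(\hat 0,\lambda)$, and in the simplicial boundary $\partial\sigma$ exactly one face (the one obtained by deleting the top vertex $\lambda$) equals $\pm\tau$, while every other face still contains $\lambda$ and hence does not lie in $\Delta_{\alpha,\lambda}$. Since $\partial f_A$ is forced to live in $\Delta_{\alpha,\lambda}$, these $\lambda$-containing contributions cancel in the sum, and with the vertex-ordering convention that places $\lambda$ first in each $\sigma\in f_A$ one obtains $\partial f_A=\sum_\tau\varepsilon_\sigma\tau$. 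The inclusion $\iota$ then identifies this sum with a cycle in $\Delta_\lambda$.

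The final step is to compare $\varepsilon_\sigma$ with $\varepsilon_\tau$. The chain $\bar\sigma$ is $\bar\tau$ with $(I,m)$ adjoined on top via the edge labelled $l^\sigma_1$, so the label sequence of $\bar\sigma$ is the label sequence of $\bar\tau$ with $l^\sigma_1$ prepended, and the sorting permutation $\rho_\sigma$ is obtained from $\rho_\tau$ by inserting the value $i_p$ into its correct position $p$ among the sorted list $i_1<\cdots<i_q$. This insertion costs $p-1$ adjacent transpositions, yielding $\sgn(\rho_\sigma)/\sgn(\rho_\tau)=(-1)^{p-1}$. By Definition~\ref{ELShelling}, $l^\sigma_1=-i_p$ when $n=m$ and $l^\sigma_1=i_p$ when $n\ne m$, so the product of labels picks up an extra factor $(-1)^{1-\delta_{m,n}}$. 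Combining yields $\varepsilon_\sigma=(-1)^{p+\delta_{m,n}}\varepsilon_\tau$ and therefore $\varphi_{q+1}^{\alpha,\lambda}(f(I,m))=(-1)^{p+\delta_{m,n}}f(J,n)$.

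The most delicate part of the argument is the sign bookkeeping in step two: the vertex-ordering convention in the order complex and the sign convention used in the Mayer--Vietoris connecting map must together absorb any parity coming from removing the top vertex, so that the coefficient of $f(J,n)$ emerges as $(-1)^{p+\delta_{m,n}}$ rather than its negative. The cancellation of $\lambda$-containing faces is itself clean once one invokes $\partial f(I,m)=0$, but justifying that only the single sign $(-1)^{p+\delta_{m,n}}$ survives from the interplay of $\rho_\sigma/\rho_\tau$ with the label-sign twist $(-1)^{1-\delta_{m,n}}$ requires a careful parity check in both cases $n=m$ and $n\ne m$.
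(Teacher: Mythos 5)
Your proposal is correct and follows essentially the same approach as the paper's own proof. You decompose $f(I,m)$ across the Mayer--Vietoris triple by restricting to chains whose top is $(J,n)$, apply the simplicial differential, argue that the $(J,n)$-containing faces drop out, and then compare $\varepsilon_\sigma$ with $\varepsilon_{\bar\sigma}$ via the $(-1)^{p-1}$ factor from the sorting permutation and the $\sgn(l^\sigma_1)=(-1)^{1-\delta_{m,n}}$ factor from the edge label; this is precisely the paper's computation, differing only in that the paper justifies the cancellation of $(J,n)$-containing faces by the pairing argument of Lemma~\ref{CycleLemma} rather than by observing that $\partial f_A$ must live in $A\cap B$ and hence contain no simplex through $\lambda$ (both reasons are valid and lead to the same surviving terms).
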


\begin{proof}
Write $d$ for the simplicial differential in 
the reduced chain complex $\CC(\Delta_{I,m})$.  
The open interval $\left((I,m),\hat{0}\right)$ 
may be realized as the union of half-closed intervals 
$\left[(J,n),\hat{0}\right)$, so that the order complex 
of each half-closed interval is a cone with apex $(J,n)$.  
Applying the differential to the sum of all facets 
contained in the interval produces the 
boundary of the cone, which in this case is the order 
complex of $\left((J,n),\hat{0}\right)$.  Indeed, 
when $d$ is applied to the sum 
$$
\ds\upsilon=\sum_{\sigma\in\left[(J,n),\hat{0}\right)}
\varepsilon_\sigma\cdot\sigma,
$$
the faces in which the element $(J,n)$ remains appear  
twice and have opposite signs as described in the proof of 
Lemma \ref{CycleLemma}.  Thus, the only faces that remain in the 
expansion of $d(\upsilon)$ are of the form 
$\bar{\sigma}=\sigma\smsm\{(J,n)\}$.  

Precisely, 
\begin{eqnarray}\label{phimapstable}
\varphi^{(I,m),(J,n)}_{q+1}(f(I,m)) 
& = &
\left[
d\left(
\ds\sum_{\sigma\in\left[(J,n),\hat{0}\right)}
\varepsilon_\sigma\cdot\sigma
\right)
\right]\\
& = & 
\left[
\ds\sum_{\sigma\in\left[(J,n),\hat{0}\right)}
\varepsilon_\sigma\cdot\bar{\sigma}
\right] \nonumber \\ 
& = & 
\left[
\ds\sum_{\bar{\sigma}\in\left((J,n),\hat{0}\right)}
\varepsilon_\sigma\cdot\bar{\sigma}
\right].\nonumber
\end{eqnarray}
 
The facet $\bar{\sigma}$ has an associated permutation 
$\rho_{\bar{\sigma}}\in\Sy_{q-1}$, and using elementary 
properties of permutation signs, we have 
$\sgn(\rho_{\sigma})=(-1)^{p+1}\cdot\sgn(\rho_{\bar{\sigma}})$, where $I\smsm J=\{i_p\}$.
Considering the definition of $\varepsilon_\sigma$, 
for each $(J,n)$ for which $(I,m)\ld(J,n)\in P_N^*$ we now have 
\begin{eqnarray*}
\varepsilon_\sigma 
& = & \sgn(\rho_\sigma)\cdot\sgn\left(\prod_{t=1}^q l_q\right)\\
& = & (-1)^{p+1}\cdot\sgn(\rho_{\bar\sigma})\cdot\sgn\left(\prod_{t=2}^q l_q\right)\cdot\sgn(l_1)\\
& = & (-1)^{p+1}\cdot\sgn(l_1)\cdot\varepsilon_{\bar\sigma}\\
& = & (-1)^{p+\delta_{m,n}}\cdot\varepsilon_{\bar{\sigma}}
\end{eqnarray*}
since $\sgn(l_1)=1$ if $n\ne m$ and $\sgn(l_1)=-1$ if $n=m$.  

Therefore, Equation (\ref{phimapstable}) becomes 
\begin{eqnarray*}
\varphi^{(I,m),(J,n)}_{q+1}(f(I,m))
& = &
\left[
\ds\sum_{\bar{\sigma}\in\left((J,n),\hat{0}\right)}
\varepsilon_\sigma\cdot\bar{\sigma}
\right]\\
& = &
\left[
\ds\sum_{\bar{\sigma}\in\left((J,n),\hat{0}\right)}
(-1)^{p+\delta_{m,n}}\cdot\varepsilon_{\bar{\sigma}}\cdot\bar{\sigma}
\right]\\
& = &
(-1)^{p+\delta_{m,n}}
\cdot\left[
\ds\sum_{\bar{\sigma}\in\left((J,n),\hat{0}\right)}
\varepsilon_{\bar{\sigma}}\cdot\bar{\sigma}
\right]\\
& = & 
(-1)^{p+\delta_{m,n}}\cdot f(J,n)
\end{eqnarray*}
which proves the lemma.
\end{proof}

As described in Section \ref{PRes}, the map $\varphi_{q+1}$ 
is defined componentwise on the one-dimensional $\Bbbk$-vectorspace 
$\mathcal{D}_{q+1,(I,m)}$ for each poset element $(I,m)$.  
Using the conclusion of Lemma \ref{DifferentialAction}, 
we immediately have  
\begin{equation}\label{PhiMap}
\ds\varphi_{q+1}|_{\mathcal{D}_{q+1,(I,m)}}=\varphi_{q+1,(I,m)}(f(I,m))=\sum_{(J,n)\ld(I,m)}(-1)^{p+\delta_{m,n}}f(J,n)
\end{equation}
where $I=\{i_1,\ldots,i_q\}$ and $i_1<\cdots<i_q$ and $J=I\smsm\{i_p\}$.  

Recall that the poset map $\eta:P_N\lra\mbb{N}^n$ 
is defined as $(I,m)\mapsto\mdeg(x_Im)$, so that 
we can homogenize the sequence of vector spaces 
$\mathcal{D}(P_N)$ to produce 
$$
  \mathcal{F}(\eta):  
  0\lra F_d
  \stackrel{\partial^{\mathcal{F}(\eta)}_d}{\lra}
  F_{d-1}\lra
  \ldots\lra 
  F_1\stackrel{\partial^{\mathcal{F}(\eta)}_1}{\lra}
  F_0,
$$ 
a sequence of multigraded modules.  
More precisely, for $q\ge 0$ and 
a poset element $(I,m)\ne\hat{0}$ 
where $I=\{i_1,\ldots,i_q\}$ and $i_1<\cdots<i_q$, 
the differential $\partial^{\mathcal{F}(\eta)}$ 
acts on a basis element $f(I,m)$ of the free 
module $F_{q+1}$ via the formula 

\begin{equation}\label{PResMap}
\begin{split}
\partial^{\mathcal{F}(\eta)}_{q+1}(f(I,m)) = \sum_{(J,n')\ld(I,m)}
 (-1)^{p+\delta_{m,n'}} x^{\eta(I,m)-\eta(J,n')}\cdot f(J,n') \phantom{spacerspacer}\\
  = \ds\sum_{(J,m)\ld(I,m)}(-1)^{p} x_{i_p}\cdot f(J,m) 
  - \ds\sum_{(J,n)\ld(I,m)}(-1)^{p} \frac{x_{i_p}m}{\g(x_{i_p} m)}\cdot f(J,n)
\end{split}  
\end{equation}

where $p$ takes the same value as in \ref{PhiMap}, so that $I\smsm\{i_p\}=J$.

It remains to show that $\mathcal{F}(\eta)$ is a minimal  
exact complex, and to do so we identify it as  
the Eliahou-Kervaire resolution.  

\begin{definition}\label{EKres}
The \emph{Eliahou-Kervaire minimal free resolution} \cite{EK}
of a stable ideal $N$ is  
$$\mathcal{E}:
  0\lra E_d
  \stackrel{\partial^\mathcal{E}_d}{\lra}
  E_{d-1}\lra
  \cdots\lra 
  E_1\stackrel{\partial^\mathcal{E}_1}{\lra}
  E_0
$$
where $E_0=R$ is the free module of rank one with basis $1$ 
and for $q\ge 0$, $E_{q+1}$ has as basis the admissible symbols
$$\left\{e(I,m): I=\{i_1,\ldots,i_q\},\max(I)<\max(m)\right\}.$$
When applied to a basis element, the differential of $\mathcal{E}$ 
takes the form 
\begin{eqnarray*}
  \partial^\mathcal{E}_{q+1}\left(e(I,m)\right) & = &
  \ds 
  \sum_{p=1}^q (-1)^p x_{i_p}\cdot e\left(I\smsm \{i_p\},m\right) \\
  & & -\sum_{p=1}^q (-1)^p \frac{x_{i_p} m}{\g(x_{i_p} m)}\cdot e\left(I\smsm \{i_p\},\g(x_{i_p} m)\right)
\end{eqnarray*}
Where we define $e(I\smsm\{i_p\},\g(x_p m))=0$ 
when $\max(I\smsm\{i_p\})\ge\max(\g(x_p m))$ 
(i.e. the symbol is inadmissible).   
\end{definition}

We now are in a position to prove the main result of this paper.  

\begin{proof}[Proof of Theorem \ref{stableTheorem}]
The Eliahou-Kervaire admissible symbols index the multigraded free modules in 
the complexes $\mathcal{E}$ and $\mathcal{F}(\eta)$ and 
therefore the generators of these modules are in one to one correspondence 
with one another.  
Further, comparing Definition \ref{EKres} and Equation \ref{PResMap}, 
$\partial^\mathcal{E}$ and $\partial^{\mathcal{F}(\eta)}$ 
have identical behavior on basis elements.  The minimality and exactness 
of $\mathcal{E}$ implies the minimality and exactness of $\mathcal{F}(\eta)$ 
so that $\mathcal{F}(\eta)$ is a minimal poset resoution of $R/N$.  
\end{proof}

\mysection{A Minimal Cellular Resolution of $R/N$}\label{StableCellular}

In this section we exhibit a minimal cellular 
resolution for an arbitrary stable monomial ideal.  
Cellular resolutions of monomial ideals have received 
considerable attention in the literature and recently 
Mermin \cite{Mermin} has shown that the Eliahou-Kervaire 
resolution $\mathcal{E}$ is cellular using methods distinct from 
those depicted here.  The techniques of Discrete 
Morse Theory have also been used by Batzies and Welker 
\cite{BatziesWelker} to produce a minimal cellular 
resolution of stable modules, which contain the class of 
stable ideals.  

More generally, methods for determining whether a given monomial  
ideal admits a minimal cellular (or CW) resolution 
remain an open question, although Velasco \cite{Velasco} has shown that 
there exist monomial ideals whose minimal free resolutions 
are not even supported on a CW complex.  

The technique described here is an example of a 
more general approach which interprets CW resolutions 
of monomial ideals through the theory of poset 
resolutions.  This approach is described in \cite{ClarkTchernev}, 
and is distinct from both the method of \cite{BatziesWelker} concerning 
stable modules and the method of \cite{Mermin} which is specific 
to stable ideals.  We begin by recalling a fundamental result due to Bj\"orner.  

\begin{proposition}\cite[Proposition 3.1]{BjCW}
A poset $P$ is a CW poset if and only if 
it is isomorphic to the face poset of a regular CW complex.
\end{proposition}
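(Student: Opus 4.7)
The plan is to establish the two directions of the equivalence separately. For the easy direction, suppose $P\cong\hat{F}(X)=F(X)\cup\{\hat{0}\}$ where $X$ is a regular CW complex and $\hat{0}$ is adjoined to represent the empty face. Conditions (1) and (2) in the definition of CW poset are immediate. For condition (3), I would invoke the classical fact that for any cell $c$ of a regular CW complex, the order complex of its subposet of proper faces is homeomorphic to $\partial\bar{c}$ via barycentric subdivision; regularity of $X$ guarantees that $\bar{c}$ is a closed ball, so $\partial\bar{c}\cong S^{\dim(c)-1}$, giving the required sphericity of $\Delta(\hat{0},c)$.

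For the nontrivial direction, I would construct a regular CW complex $X(P)$ with $\hat{F}(X(P))\cong P$ inductively along the rank filtration of $P$. Note that condition (3) forces $P$ to be ranked with $\mathrm{rk}(x)=\dim\Delta(\hat{0},x)+1$. I would introduce a $0$-cell $c_a$ for each atom $a\in P$; then, assuming a regular subcomplex $X(P)_n$ with augmented face poset $\{\hat{0}\}\cup\{x\in P:\mathrm{rk}(x)\le n\}$ has been built, I would process each rank-$(n+1)$ element $x$ as follows. The open interval $(\hat{0},x)$ is the face poset of an already constructed subcomplex $Y_x\subseteq X(P)_n$, and the forward direction just proved gives $|Y_x|\cong|\Delta(\hat{0},x)|\cong S^n$, the last step by the CW poset hypothesis. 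Choose a homeomorphism $\psi_x\colon S^n\to Y_x$ and attach an $(n+1)$-cell along $\psi_x$ to produce $X(P)_{n+1}$.

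The main obstacle is verifying that this inductive construction actually yields a regular CW complex whose augmented face poset is canonically isomorphic to $P$. Regularity is built in: each attaching map $\psi_x$ was chosen to be a homeomorphism onto its image, so the closure of $c_x$ is a closed ball and its boundary is embedded as the subcomplex $Y_x$. For the face-poset identification, one must show that $\bar{c}_x\subseteq\bar{c}_y$ precisely when $x\le y$ in $P$; this is immediate from the inductive construction, since $\bar{c}_x=c_x\cup Y_x$ and $Y_x=\bigcup_{y<x}\bar{c}_y$ by the inductive identification of face posets, so cell-containment in $X(P)$ transcribes exactly the order relation of $P$. The one subtle point is coherence: at stage $n+1$, many elements $x$ of rank $n+1$ may share faces, so one should process them independently and observe that the disjointness of their open cells, combined with the inductively correct shared boundary complex $Y_x$, makes the order of processing immaterial.
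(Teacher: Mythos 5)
This proposition is cited in Clark's paper directly from Bj\"orner \cite{BjCW} without proof, so there is no ``paper's own proof'' to compare against; I evaluate your argument against Bj\"orner's original. Your reconstruction is essentially the standard argument: the easy direction via the barycentric subdivision theorem for regular CW complexes (the face poset of $\partial\bar{c}$, which is a subcomplex by regularity, has order complex homeomorphic to $\partial\bar{c}\cong S^{\dim c - 1}$), and the hard direction by inductively attaching cells along the spheres guaranteed by condition (3) of the CW-poset definition. Your verification of regularity (injectivity of the characteristic map is automatic once the attaching map $\psi_x$ is a homeomorphism onto the subcomplex $Y_x$) and of the face-poset isomorphism (cell containment records exactly the order relation, by disjointness of open cells and the inductive identification of $Y_x=\bigcup_{y<x}\bar{c}_y$) is sound, and your remark on order-independence of processing within a fixed rank is the right thing to check.

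Two small presentational points, neither of which is a gap. First, your rank normalization $\mathrm{rk}(x)=\dim\Delta(\hat{0},x)+1$ places atoms at rank $0$, i.e., rank equals cell dimension rather than the more usual poset rank in which $\hat{0}$ has rank $0$ and atoms have rank $1$; your construction is internally consistent with this choice, but a reader expecting the standard convention will see an off-by-one. Second, in the inductive step you justify $|Y_x|\cong|\Delta(\hat{0},x)|$ by appealing to ``the forward direction just proved,'' but what you actually need is the barycentric subdivision theorem applied to the arbitrary (full) subcomplex $Y_x\subseteq X(P)_n$, whereas your forward direction as stated only treats the special case where the subcomplex is $\partial\bar{c}$ for a single cell. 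The general statement $|\Delta(F(Y))|\cong|Y|$ for any subcomplex $Y$ of a regular CW complex is what should be invoked; since you already cite barycentric subdivision in that direction, this is a matter of stating the lemma at the right level of generality rather than a missing idea.
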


In the case of the poset of admissible symbols $P_N$, we 
interpret Bj\"orner's proof explicitly to produce the corresponding 
regular CW complex $X_N$.  
On the level of cells, $\hat{0}\in P_N$ corresponds 
to the empty cell and each admissible symbol $(I,m)$ of $P_N$ 
corresponds to a closed cell $X_{I,m}$ of dimension $|I|$ 
for which $P(X_{I,m})=[\hat 0,(I,m)]$.  Taking $X_N=\bigcup X_{I,m}$ 
we have an isomorphism of posets $P(X_N)\cong P_N$.  
The regular CW complex $X_N$ also comes 
equipped with a $\mbb{Z}^n$ grading by realizing the map 
$\eta:P_N\lra\mbb{N}^n$ of Theorem \ref{stableTheorem} as a map 
$\eta:X_N\lra\mbb{N}^n$ where a cell $X_{I,m}\mapsto\eta(I,m)=\mdeg(x_Im)$.

\begin{example}
The stable ideal $N=\langle a,b,c\rangle^2=\langle a^2,ab,ac,b^2,bc,c^2 \rangle$ has 
minimal resolution supported by $X_N$, the  
regular CW complex depicted below, which has six 0-cells, eight 1-cells and three 2-cells.  
The face poset of this cell complex $P(X_N)$ is isomorphic to the poset of admissible 
symbols $P_N$ given in Example \ref{PosetExample}.  

\centering
\includegraphics[scale=0.7]{CellM2}
\end{example}

We recall  the following well-known definition to which we incorporate 
the information given by the poset map $\eta$.  For a more comprehensive 
view of cellular and CW resolutions, see \cite{BatziesWelker,BS,Velasco}.

\begin{definition}\label{CellResDef}
A complex of multigraded $R$-modules, $\mathcal{F}_N$, is said to 
be a \emph{cellular resolution of $R/N$} if there 
exists an $\mbb{N}^n$-graded regular CW complex $X$ such that:
\begin{enumerate}
\item For all $i\ge{0}$, the free module $(\mathcal{F}_N)_i$ has as its 
      basis the $i-1$ dimensional cells of $X$.
\item For a basis element $e\in(\mathcal{F}_N)_i$, one has $\mdeg(e)=\eta(e)$,
\item The differential $\partial$ of \/ $\mathcal{F}_N$ acts on a basis 
      element $e\in (\mathcal{F}_N)_i$ as
      $$
        \partial(e)=
        \sum_{\substack{e'\subset{e}\subset{X} \\ \dim(e)=\dim(e')+1}}
        c_{e,e'}\cdot x^{\eta(e)-\eta(e')}\cdot e'
      $$
      where $c_{e,e'}$ is the coefficient of 
      the cell $e'$ in the differential of $e$ 
      in the cellular chain complex of $X$.  
\end{enumerate}
\end{definition}

With this definition in hand, we are now able to reinterpret 
Theorem \ref{stableTheorem} in our final result.  

\begin{theorem}\label{StableCellularTheorem}
Suppose that $N$ is a stable monomial ideal. 
Then the minimal free resolution $\mathcal{F}(\eta)$ 
is a minimal cellular resolution of $R/N$.
\end{theorem}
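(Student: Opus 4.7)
The plan is to verify the three conditions of Definition \ref{CellResDef} using the regular CW complex $X_N$ whose face poset is $P_N$ (as provided by Theorem \ref{PNCWposet} together with Bj\"orner's proposition), taking $\mathcal{F}_N$ to be the complex $\mathcal{F}(\eta)$ already constructed.

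First I would record the natural bijection between free-module generators and cells: the admissible symbol $(I,m)$ with $|I|=q$ corresponds to the closed $q$-cell $X_{I,m}$, and $\hat 0$ corresponds to the empty cell. Since $(\mathcal{F}(\eta))_{q+1}$ has basis $\{f(I,m):|I|=q\}$, this identifies its basis with the $((q+1)-1)$-dimensional cells of $X_N$, giving condition (1). Condition (2) is immediate from the homogenization construction, which defines $\mdeg(f(I,m))=\eta(I,m)$.

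The substantive step is condition (3), matching the algebraic differential with the cellular one. The monomial factor $x^{\eta(I,m)-\eta(J,n)}$ appears in both expressions by construction, so the task reduces to comparing the incidence numbers $c_{X_{I,m},X_{J,n}}$ in the cellular chain complex of $X_N$ with the scalars $(-1)^{p+\delta_{m,n}}$ computed in Lemma \ref{DifferentialAction}. The key observation is that for a regular CW complex constructed from a CW poset in the canonical way, the reduced chain complex of the order complex $\Delta_{I,m}=\Delta(\hat 0,(I,m))$ is precisely the cellular chain complex of the boundary sphere of $X_{I,m}$; under this identification the canonical cycle $f(I,m)\in\widetilde{\textup{H}}_{q-1}(\Delta_{I,m},\Bbbk)$ serves as a choice of orientation for $X_{I,m}$, and the Mayer--Vietoris connecting map $\varphi_{q+1}^{(I,m),(J,n)}$ computes the attaching-map degree between $X_{I,m}$ and $X_{J,n}$. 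Thus Lemma \ref{DifferentialAction} simultaneously describes the $\mathcal{F}(\eta)$-differential and the cellular incidence numbers, and the two coincide. Minimality is inherited from Theorem \ref{stableTheorem}, since $\mathcal{F}(\eta)$ was already shown to be a minimal free resolution of $R/N$.

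The main obstacle is making the coefficient comparison fully rigorous: one must check that the combinatorial sign $\varepsilon_\sigma$ arising from the dual EL-shelling of $P_N$ is the same sign that the regular CW structure on $X_N$ assigns to the corresponding attaching map. Because both signs are determined only by the covering structure of $P_N$ (through the length-two intervals classified in the proof of Theorem \ref{PNCWposet} and the orientation-reversing argument used in Lemma \ref{CycleLemma}), this reconciliation is essentially bookkeeping once the identification of $\widetilde{\textup{C}}_\bullet(\Delta_{I,m})$ with the cellular chain complex of $\partial X_{I,m}$ is in place.
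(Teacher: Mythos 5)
Your proposal follows the same route as the paper: check conditions (1) and (2) directly from the construction, and reduce condition (3) to the identification of the differential in $\mathcal{D}(P_N)$ with the cellular boundary map of $X_N$. The one difference is that the paper handles this identification by invoking the main result of \cite{ClarkTchernev}, which provides a canonical isomorphism between $\mathcal{D}(P_N)$ and the cellular chain complex $\mathcal{C}(X_N)$, whereas you sketch why such an isomorphism should exist (viewing $f(I,m)$ as a choice of orientation and interpreting the Mayer--Vietoris connecting map as the incidence degree). Your sketch is the right idea and is essentially the content of the cited result; the only caveat is your phrase that $\widetilde{\mathcal{C}}_\bullet(\Delta_{I,m})$ \emph{is} the cellular chain complex of $\partial X_{I,m}$ --- it is the simplicial chain complex of the barycentric subdivision, which is quasi-isomorphic but not equal, and the sign-matching you call ``bookkeeping'' is precisely the nontrivial part that \cite{ClarkTchernev} is cited to resolve.
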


\begin{proof}
Conditions 1 and 2 of Definition \ref{CellResDef} are 
clear from the structure of $X_N$, its correspondence  
to the poset $P_N$ and the construction of the resolution 
$\mathcal{F}(\eta)$.  It therefore remains to verify that 
condition 3 is satisfied.  The main result 
in \cite{ClarkTchernev} provides a canonical isomorphism 
between the complex $\mathcal{D}(P_N)$ and $\mathcal{C}(X_N)$, 
the cellular chain complex of $X_N$.  Therefore, the 
differential of $\mathcal{F}(\eta)$ satisfies condition 3.  
\end{proof}

\end{document}